\newtheorem{thm}{Theorem}
\newtheorem{lem}[thm]{Lemma}
\newtheorem{claim}[thm]{Claim}
\newtheorem{obs}[thm]{Observation}
\newtheorem{ques}[thm]{Question}
\newtheorem{prop}[thm]{Proposition}
\theoremstyle{definition}
\newtheorem{defi}[thm]{Definition}
\newcommand{\NN}{\mathbb{N}}
\newcommand{\set}[1]{\left\{#1\right\}}
\newcommand{\abs}[1]{\left|#1\right|}
\newcommand{\ignore}[1]{}
\newcommand{\ie}{{\it i.e.\ }}
\newcommand{\rc}{r.c.\ }
\newcommand{\resp}{resp.\ }
\newcommand{\tec}{$2$-edge-colored}
\newcommand{\setk}[1]{\left\llbracket #1 \right\rrbracket}
\newcommand{\PROBLEM}[3]{

\begin{description}
\itemsep -6mm
\item[\mbox{\hskip .6cm}Problem:] {\sc #1}\\
\item[\mbox{\hskip .6cm}Instance:] #2\\
\item[\mbox{\hskip .6cm}Question:] #3
\end{description}

}
\newcommand{\tikzstylemacro}{
\tikzstyle{n} = [draw,circle,minimum size=0.8mm,inner sep=0pt,outer sep=0pt,fill=black];
\tikzstyle{g} = [line width=1pt,color=NavyBlue];
\tikzstyle{r} = [line width=1pt, color=red,dashdotted];
}
\begin{document}

\title{On the achromatic number of signed graphs}
\author{Dimitri Lajou\footnote{\'ENS Lyon and
Univ. Bordeaux, Bordeaux INP, CNRS, LaBRI, UMR5800, F-33400 Talence, France.
Partially supported by the ANR project HOSIGRA (ANR-17-CE40-0022)}}
\date{\today}

\maketitle

\begin{abstract}
In this paper, we generalize the concept of complete coloring and
achromatic number to \tec{} graphs and signed graphs. 
We give
some useful relationships between 
different possible definitions of such achromatic numbers
and prove that computing any of them is NP-complete.
\end{abstract}


\section{Introduction}

All the graphs we consider are undirected and simple.
We denote by $V(G)$ and $E(G)$ the set of vertices and the set of edges of a graph $G$, respectively. The {\em neighbourhood} $N_G(u)$ of a vertex $u$ of $G$ is the set of vertices which share an edge with $u$ in $G$.
A {\em \tec{} graph} is a graph where each edge of $E(G)$ can be either positive or negative.
We denote by $(G,C)$ such a graph, where $G$ is an ordinary graph, called the {\em underlying graph} of $(G,C)$, and $C$ is the set of negative edges, also called the {\em signature} of $(G,C)$.

\medskip

A {\em signed graph} $[G,\Sigma]$ is an equivalence class on the set of \tec{} graph $(G,C)$ where $C \subseteq E(G)$.
Two \tec{} graphs $(G,C)$ and $(G,D)$ are {\em equivalent} if we can go from one to the other by a series of re-signings, where {\em re-signing} at a vertex $v$ consists in inverting the sign of all the edges incident with~$v$. 
A {\em representative} of a signed graph $[G,\Sigma]$ is a \tec{} graph which belongs to $[G,\Sigma]$.
A {\em signature} of a signed graph $[G,\Sigma]$ is a signature of one of its representatives.
For the rest of this article, we will use the adjective {\em \tec{}} when the signature is fixed, and {\em signed} when re-signing
is allowed.

We write $(G,C) \in [G,\Sigma]$ if $(G,C)$ is one of the representatives of $[G,\Sigma]$.  The {\em canonical representative} of $[G,\Sigma]$ is the \tec{} graph $(G,C)$ where $C = \Sigma$. Note that if $(G,C) \in [G,\Sigma]$ and $(G,C') \in [G,\Sigma]$, then 
$[G,\Sigma_C]$ and $[G,\Sigma_{C'}]$, where $\Sigma_C = C$ and $\Sigma_{C'} = C'$, are both equal to the signed graph $[G,\Sigma]$. 

To avoid any possible confusion, signatures of \tec{} graphs will be denoted by Roman letters
while signatures of signed graphs will be denoted by Greek letters.

%

\medskip

A {\em $k$-(vertex-)coloring} of a \tec{} graph is a function $\alpha:V(G) \rightarrow \setk{k}$,
where $\setk{k}$ denotes the set $\{1,\dots,k\}$,
such that $\alpha(u) \neq \alpha(v)$ for every $uv \in E(G)$ and,
for every two colors $i$ and $j$, all the edges $uv$ with $\alpha(u)=i$ and $\alpha(v)=j$ have the same sign.
The {\em chromatic number} $\chi_2(G,C)$ of a \tec{} graph $(G,C)$
is the smallest $k$ for which $(G,C)$ admits a $k$-coloring.
Similarly, the {\em chromatic number} $\chi_s[G,\Sigma]$ of a signed graph $[G,\Sigma]$
is the smallest $k$ for which $[G,\Sigma]$ admits a representative $(G,C)$ with $\chi_2(G,C)=k$. Alternatively, a {\em $k$-(vertex-)coloring} of a signed graph is a $k$-(vertex-)coloring of one of its representative and $\chi_s[G,\Sigma]$ is the smallest $k$ for which a $k$-coloring of $[G,\Sigma]$ exists.

A {\em (\tec) homomorphism} of a \tec{} graph $(G,C)$ to a \tec{} graph $(H,D)$
is a function $\varphi$ from $V(G)$ to $V(H)$ such that,
for every pair of vertices $u,v \in V(G)$, $uv\in E(G)$ implies $\varphi(u)\varphi(v)\in E(H)$ and, for every edge $uv\in E(G)$,
$uv\in C$ if and only if $\varphi(u)\varphi(v)\in D$.
Similarly,
a {\em (signed) homomorphism} of a signed graph $[G,\Sigma]$ to a signed graph $[H,\Pi]$
is a function $\varphi$ from $V(G)$ to $V(H)$
which is a homomorphism of a \tec{} graph $(G,C)$ to a \tec{} graph $(H,D)$,
where $(G,C)$ is a representative of $[G,\Sigma]$ and $(H,D)$ is a representative of $[H,\Pi]$.
As stated in \cite{Naserasr2014}, we can observe that we can always choose $D=\Pi$, so that re-signing is done only on $[G,\Sigma]$, if needed. 
A signed homomorphism of $[G,\Sigma]$ to $[H,\Pi]$ can thus be viewed as 
a \tec{} homomorphism of $(G,C)$ to $(H,D_\Pi)$, where $(G,C)$ is a representative of $[G,\Sigma]$ (obtained
by re-signing $[G,\Sigma]$) and $D_\Pi=\Pi$.
Homomorphisms of \tec{} graphs were introduced by Alon and Marshall in~\cite{Alon1998},
while homomorphisms of signed graphs were introduced by Naserasr, Rollov\'a and Sopena in~\cite{Naserasr2014}.

A {\em surjective homomorphism} is a homomorphism whose co-domain is the image of its domain.
With each coloring of a \tec{} graph,
we can associate a surjective \tec{} homomorphism which identifies all vertices having the same color. 
Similarly, with any coloring of a signed graph, we can associate a signed homomorphism which re-signs the signed graph to get the signature for which the coloring is defined, and then identifies all vertices having the same color.

An {\em unbalanced path} of order $k\ge 2$ in a \tec{} graph, denoted $UP_k$, is a path of order $k$ having an odd number of negative edges.
A {\em balanced path} of order $k\ge 2$  in a \tec{} graph, denoted $BP_k$, is a path of order $k$ having an even number of negative edges.
An {\em unbalanced cycle} of length $k\ge 3$  in a \tec{} graph, denoted $UC_k$, is a
cycle of length $k$ having an odd number of negative edges.
An {\em balanced cycle} of length $k\ge 3$  in a \tec{} graph, denoted $BC_k$, is a
cycle of length $k$ having an even number of negative edges.
Note that in a signed graph, whether a cycle is balanced or unbalanced does not depend on the representative of the signed graph (\ie this structure is invariant by re-signing).
In fact, Zaslavsky in \cite{Zaslavsky1982} showed that a signed graph is entirely characterized by its underlying graph and the set of its balanced cycles (or the set of its unbalanced cycles).

In what follows, a {\em digon} will be a $UC_2$, \ie two vertices linked by two edges, one positive and one negative.
As our graphs are simple, we want to make sure that they contain no loops and no digons. 
This will become particularly important when we construct homomorphisms, as the image graph must be simple.
For a \tec{} graph this means, in particular, that we cannot identify vertices which belongs to the same edge or to the same $UP_3$, as identifying them would create a loop in the first case and a digon in the second case. 
Note that we can always identify a pair of vertices that do not belong to the same edge or to the same $UP_3$.
Any two such vertices are said to be {\em identifiable}.
%
In a signed graph, before identifying two non-adjacent vertices $u$ and $v$, we thus need to re-sign the signed graph in order to remove every $UP_3$ containing $u$ and $v$.
Note that this is not always possible. For example, in the unbalanced cycle $UC_4$, we cannot identify any pair of vertices.
Indeed, Naserasr, Rollov\'{a} and Sopena showed in~\cite{Naserasr2014} that two vertices are {\em  identifiable} 
if and only if they do not belong to the same edge or to the same $UC_4$. 

Note that we can construct a surjective homomorphism of a \tec{} graph or a signed graph by repeatedly identifying pairs of identifiable vertices, until no such pair exists. The image graph is then the obtained \tec{} graph or signed graph.


We will only consider surjective homomorphisms in the rest of this paper, and write
$(G,C)\rightarrow_2 (H,D)$ (\resp $[G,\Sigma]\rightarrow_s [H,\Pi]$) whenever there exists a surjective homomorphism
of $(G,C)$ to $(H,D)$ (\resp of $[G,\Sigma]$ to $[H,\Pi]$).

A {\em \tec{} clique} (\resp a {\em signed clique}) is a \tec{} graph (\resp a signed graph) which is its unique homomorphic image,
up to isomorphism.
Alternatively, it can be defined as a graph whose chromatic number equals its order.
The chromatic number can thus be seen as the smallest order of a clique to which the graph maps by a surjective homomorphism.
Note here that every signed clique is also a \tec{} clique.

In Figure~\ref{ex:cliques}, we represent two \tec{} graphs that are \tec{} cliques but whose signed versions are not signed cliques. Indeed, in Figure~\ref{2ec-clique1}, it suffices to re-sign one of the degree~$1$ vertices to be able to identify them. In Figure~\ref{2ec-clique2}, it suffices to re-sign the vertices $a$ and $b$, or $c$ and $d$, to get a pair of identifiable vertices.

\begin{figure}
  \centering
  \subfloat[A 2-edge colored clique which is not a signed clique: the graph $UP_3$.]{
  \begin{tikzpicture}
    \tikzstylemacro{}

    \node[n] (a) at (-1,0) {};
    \node[n] (b) at (0,1) {};
    \node[n] (c) at (1,0) {};
    \draw (b) edge[r] (a) edge[g] (c);

  \end{tikzpicture}
  \label{2ec-clique1}
 }
  \hskip 1.5cm
  \subfloat[A 2-edge colored clique which is not a signed clique.]{
  \begin{tikzpicture}
    \tikzstylemacro{}

    \node[n,label=right:$a$] (a) at (30:1) {};
    \node[n,label=right:$b$] (b) at (-30:1) {};
    \node[n,label=left:$c$] (c) at (150:1) {};
    \node[n,label=left:$d$] (d) at (210:1) {};
    \node[n,label=above:$x$] (x) at (0,0) {};

	\draw (x) edge[g] (a) edge[g] (b) edge[r] (c) edge[r] (d);
    \draw (a) edge[g] (b);
    \draw (d) edge[g] (c);

  \end{tikzpicture}
  \label{2ec-clique2}
 }
  \hskip 1.5cm
 \subfloat[The signed clique $UC_4$.]{
  \begin{tikzpicture}
    \tikzstylemacro{}

    \node[n] (a1) at (-2,0.5) {};
    \node[n] (a2) at (-2,-0.5) {};
    \node[n] (a3) at (-1,-0.5) {};
    \node[n] (a4) at (-1,0.5) {};
    
	\draw (a1) edge[g] (a2) edge[r] (a4);
	\draw (a3) edge[g] (a2) edge[g] (a4);

  \end{tikzpicture}
  \label{signed-clique1}
}
 \hskip 1.5cm
 \subfloat[A signed clique.]{
  \begin{tikzpicture}
    \tikzstylemacro{}

    \node[n] (a1) at (-2,0.5) {};
    \node[n] (a2) at (-2,-0.5) {};
    \node[n] (a3) at (-1,-0.5) {};
    \node[n] (a4) at (-1,0.5) {};
    
    \node[n] (b1) at (2,0.5) {};
    \node[n] (b2) at (2,-0.5) {};
    \node[n] (b3) at (1,-0.5) {};
    \node[n] (b4) at (1,0.5) {};
    
    \node[n] (x1) at (0,1) {};
    \node[n] (x2) at (0,-1) {};
    
    \draw (a1) edge[g] (a2) edge[r] (a4);
	\draw (a3) edge[g] (a2) edge[g] (a4);
	
	\draw (b1) edge[g] (b2) edge[r] (b4);
	\draw (b3) edge[g] (b2) edge[g] (b4);
	
	\foreach \i in {1,2,3,4}{
		\draw (x1) edge[r] (a\i);
		\draw (x1) edge[g] (b\i);
		\draw (x2) edge[g] (a\i);
		\draw (x2) edge[g] (b\i);
	}

  \end{tikzpicture}
  \label{signed-clique2}
}
\caption{Two \tec{} cliques and two signed cliques.}
\label{ex:cliques}
\end{figure}
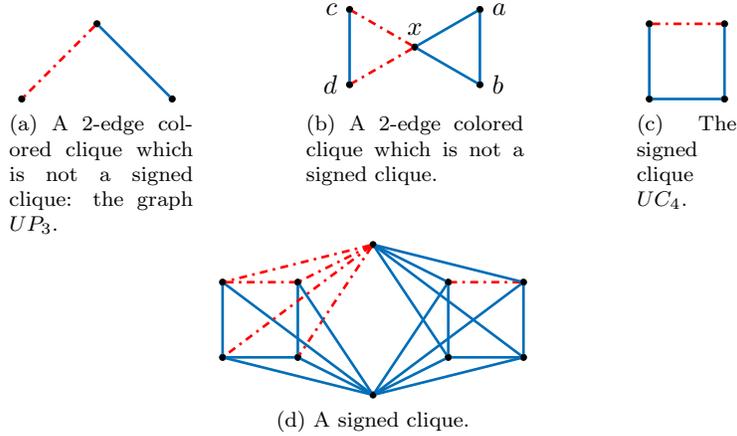

\medskip


Harary and Hedetniemi defined in~\cite{Harary1970} the {\em achromatic number} $\psi(G)$
of a graph $G$ as the largest $k$ such that there exists a {\em complete $k$-coloring} of $G$,
where a complete $k$-coloring is a coloring where each pair of colors appears on some edge
(see also~\cite[Chapter 12]{Chartrand}).
Therefore, a complete coloring is nothing but a surjective homomorphism to a clique, where a {\em homomorphism} 
is an edge-preserving vertex mapping.

Similarly, we define the {\em (\tec{}) achromatic number} $\psi_2(G,C)$
of a \tec{} graph $(G,C)$ as the largest order of a \tec{} clique $(K,D)$
such that $(G,C)\rightarrow_2 (K,D)$,
and the {\em (signed) achromatic number} $\psi_s[G,\Sigma]$
of a signed graph $[G,\Sigma]$ as the largest order of a signed clique $[K,\Pi]$
such that $[G,\Sigma]\rightarrow_s [K,\Pi]$
 (recall that all homomorphisms we consider
are surjective).

A {\em complete $k$-coloring} of a \tec{} graph $(G,C)$, or of a signed graph $[G,\Sigma]$,
can thus be defined as a $k$-coloring such that, for every two colors $i$ and $j$, there exist an $i$-colored
vertex and a $j$-colored vertex which are not identifiable.
In such a case, we say that the colors $i$ and~$j$ are {\em in conflict}.


The notions of complete colorings and achromatic numbers have also been extended to digraphs in \cite{AMOR}, \cite{EDWARDS} or~\cite{NEUMANNLARA},
and to oriented graphs in~\cite{SOP}.
However, the situation is fundamentally different since any two colored or signed edges $uv$ and $vu$ are identical,
while any two arcs $\overrightarrow{uv}$ and $\overrightarrow{vu}$ are not.

\medskip

In this paper, we are mainly interested in the three following questions.
\begin{enumerate}
\item For a given signed graph $[G,\Sigma]$, what can we say about the \tec{} achromatic number
of $(G,C)$, for any signature $C$ being equivalent to $\Sigma$?
\item For a given graph $G$, what can we say about the \tec{} achromatic number
of $(G,C)$, for any signature $C$?
\item For a given graph $G$, what can we say about the signed achromatic number
of $[G,\Sigma]$, for any signature $\Sigma$?
\end{enumerate}

To this end, we define the six following types of achromatic numbers.

\begin{defi}
For any graph $G$ and any signed graph $(G,\Sigma)$, we let
\begin{itemize}
  \item $\psi_{\max}[G,\Sigma] := \max \set{\psi_2(G,C) | (G,C) \in [G,\Sigma] }$,  the
  {\em signed max-achromatic number} of $[G,\Sigma]$,
  \item $\psi_{\min}[G,\Sigma] := \min \set{\psi_2(G,C) | (G,C) \in [G,\Sigma] }$,  the
  {\em signed min-achromatic number} of $[G,\Sigma]$,
  \item $\psi_{\max}(G) := \max \set{\psi_2(G,C) |  C \subseteq E(G) }$,  the
  {\em \tec{} max-achromatic number} of~$G$,
  \item $\psi_{\min}(G) := \min \set{\psi_2(G,C) |  C \subseteq E(G)  }$,  the
  {\em \tec{} min-achromatic number} of~$G$,
  \item $\psi_{\max}^{signed}(G) := \max \set{\psi_s[G,\Sigma] |  \Sigma \subseteq E(G) }$,  the
  {\em signed max-achromatic number} of~$G$,
  \item $\psi_{\min}^{signed}(G) := \min \set{\psi_s[G,\Sigma] |  \Sigma \subseteq E(G)  }$,  the
  {\em signed min-achromatic number} of~$G$.
  \end{itemize}
\end{defi}


We will study the complexity status of the problem of determining each
of these numbers.
Our paper is organized as follows.
In the next section we detail some properties of these numbers and state our main results.
Section~\ref{section-NP} is devoted to the proofs of these results and we propose directions
for future research in Section~\ref{section-discussion}.

%


\section{{Preliminaries and statement of results}}

In this section, we detail some properties of the achromatic numbers
introduced in the previous section.
We first compare chromatic and achromatic numbers of \tec{} graphs
and signed graphs.

\begin{thm}
  For every signed graph $[G,\Sigma]$ and every \tec{} graph $(G,C) \in [G,\Sigma]$,
  $$ \chi_2(G,C) \leq \psi_2(G,C) \leq \psi_{\max}[G,\Sigma].$$
\end{thm}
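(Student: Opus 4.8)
The plan is to treat the two inequalities separately, as they are of very different natures. The right-hand inequality $\psi_2(G,C) \leq \psi_{\max}[G,\Sigma]$ is immediate from the definitions: since $(G,C) \in [G,\Sigma]$ by hypothesis, the value $\psi_2(G,C)$ is one of the terms over which the maximum defining $\psi_{\max}[G,\Sigma] = \max \set{\psi_2(G,C') | (G,C') \in [G,\Sigma]}$ is taken, so it cannot exceed that maximum. No real work is needed here.

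The left-hand inequality $\chi_2(G,C) \leq \psi_2(G,C)$ is the \tec{} analogue of the classical bound $\chi(G) \leq \psi(G)$ and carries the main argument. First I would set $k = \chi_2(G,C)$ and fix an optimal $k$-coloring $\alpha$ of $(G,C)$. I would then observe that $\alpha$ induces a surjective \tec{} homomorphism $\varphi$ onto a \tec{} graph $(K,D)$ of order $k$: the vertices of $K$ are the $k$ colors, two colors are adjacent exactly when some edge of $(G,C)$ joins vertices of those colors, and the sign of such an edge in $(K,D)$ is well-defined precisely because the coloring condition forces all edges between two given color classes to share a common sign. The properness of $\alpha$ rules out loops, while the sign-consistency condition rules out digons, so $(K,D)$ is a genuine simple \tec{} graph to which $(G,C)$ maps surjectively.

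Next I would argue that $(K,D)$ must in fact be a \tec{} clique. If it were not, then by the characterization recalled in the introduction it would admit a pair of identifiable vertices; identifying them yields a surjective homomorphism onto a \tec{} graph of strictly smaller order, and composing this with $\varphi$ gives a coloring of $(G,C)$ using fewer than $k$ colors, contradicting the minimality of $k = \chi_2(G,C)$. Hence $(K,D)$ is a \tec{} clique of order $k$ with $(G,C) \rightarrow_2 (K,D)$. Since $\psi_2(G,C)$ is by definition the largest order of a \tec{} clique to which $(G,C)$ surjectively maps, we conclude $\chi_2(G,C) = k \leq \psi_2(G,C)$.

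I expect the only delicate point to be the verification that the image $(K,D)$ is a well-defined \emph{simple} \tec{} graph and that minimality genuinely forces it to be a clique rather than merely some \tec{} graph on $k$ vertices; once these two facts are in place, the chain of inequalities follows by combining the clique of order $\chi_2(G,C)$ with the definition of $\psi_2$ and the membership $(G,C) \in [G,\Sigma]$. The rest is routine bookkeeping.
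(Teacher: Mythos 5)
Your proposal is correct and follows essentially the same route as the paper: the right-hand inequality is handled identically (membership of $\psi_2(G,C)$ in the set over which the maximum defining $\psi_{\max}[G,\Sigma]$ is taken), and your detailed argument for $\chi_2(G,C) \leq \psi_2(G,C)$ merely unpacks what the paper dispatches as ``by definition,'' relying on its earlier remark that the chromatic number is the smallest order of a \tec{} clique to which the graph maps surjectively. Your verification that the image of an optimal coloring is a well-defined simple \tec{} graph, and that minimality forces it to be a clique via identification of identifiable pairs, is a sound and complete justification of that remark, but it introduces no new idea relative to the paper's proof.
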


\begin{proof}
By definition, the chromatic number of a \tec{} graph is at most its achromatic number.
Since $\psi_{\max}[G,\Sigma]$ is the maximum value of $\psi_2(G,C')$ taken over all $(G,C') \in [G,\Sigma]$, it is at least $\psi_2(G,C)$.
\end{proof}

\begin{thm}
  For every signed graph $[G,\Sigma]$,
  $$ \chi_s[G,\Sigma] \leq \psi_s[G,\Sigma] \leq \psi_{\max}[G,\Sigma].$$
\end{thm}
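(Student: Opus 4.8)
The plan is to prove the two inequalities separately; the first is essentially a bookkeeping argument from the definitions, while the second carries the real content.

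For the lower bound $\chi_s[G,\Sigma] \le \psi_s[G,\Sigma]$, I would argue exactly as in the previous theorem. The preliminaries already record that the chromatic number is the \emph{smallest} order of a signed clique to which the graph maps by a surjective homomorphism. Concretely, a proper coloring of $[G,\Sigma]$ with $\chi_s[G,\Sigma]$ colors, followed by identifying all vertices sharing a color, gives a surjective signed homomorphism $[G,\Sigma] \rightarrow_s [K,\Pi]$ onto a signed graph on $\chi_s[G,\Sigma]$ vertices; this image is forced to be a signed clique, since $[G,\Sigma] \rightarrow_s [K,\Pi]$ gives $\chi_s[G,\Sigma] \le \chi_s[K,\Pi] \le |V(K)| = \chi_s[G,\Sigma]$, so that its chromatic number equals its order. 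As $\psi_s[G,\Sigma]$ is the \emph{largest} order of a signed clique receiving a surjective homomorphism from $[G,\Sigma]$, it is at least $\chi_s[G,\Sigma]$.

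For the upper bound $\psi_s[G,\Sigma] \le \psi_{\max}[G,\Sigma]$, let $[K,\Pi]$ be a signed clique of order $\psi_s[G,\Sigma]$ with $[G,\Sigma] \rightarrow_s [K,\Pi]$. The idea is to convert this signed homomorphism into a \tec{} homomorphism witnessing a large \tec{} achromatic number for a single representative of $[G,\Sigma]$. I would invoke the remark of Naserasr, Rollov\'{a} and Sopena quoted above: the signed homomorphism can be realized as a \tec{} homomorphism $(G,C) \rightarrow_2 (K,\Pi)$ in which the re-signing is absorbed entirely into the domain, so that $(G,C)$ is a representative of $[G,\Sigma]$ and the image is the canonical representative $(K,\Pi)$; this map remains surjective. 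It then remains to check that $(K,\Pi)$ is a \tec{} clique, which is the already-noted fact that every signed clique is a \tec{} clique: from $\chi_s[K,\Pi] = |V(K)|$ and $\chi_s[K,\Pi] = \min_{(K,D) \in [K,\Pi]} \chi_2(K,D)$, every representative, in particular $(K,\Pi)$, satisfies $\chi_2(K,\Pi) = |V(K)|$. Hence $(G,C) \rightarrow_2 (K,\Pi)$ exhibits a \tec{} clique of order $\psi_s[G,\Sigma]$ as a surjective image of $(G,C)$, so $\psi_2(G,C) \ge \psi_s[G,\Sigma]$, and since $(G,C) \in [G,\Sigma]$ the definition of $\psi_{\max}$ gives $\psi_{\max}[G,\Sigma] \ge \psi_2(G,C) \ge \psi_s[G,\Sigma]$.

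The one delicate point, and the step I expect to be the main obstacle, is the direction of the re-signing. One must push all re-signing onto the domain and keep the target as the \emph{canonical} representative $(K,\Pi)$; otherwise the target would be an arbitrary representative rather than a fixed-signature \tec{} graph, and the clean reduction to a specific value $\psi_2(G,C)$ with $(G,C) \in [G,\Sigma]$ would no longer be available. Everything else is a direct unwinding of the definitions together with the characterization of cliques by their chromatic number.
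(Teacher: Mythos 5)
Your proof is correct and takes essentially the same route as the paper's: the first inequality is treated as definitional bookkeeping, and the second converts the signed homomorphism $[G,\Sigma] \rightarrow_s [K,\Pi]$ into a \tec{} homomorphism $(G,C) \rightarrow_2 (K,\Pi)$ with $(G,C) \in [G,\Sigma]$ and the canonical representative as target, then uses the fact that every signed clique is a \tec{} clique to conclude $\psi_s[G,\Sigma] \leq \psi_2(G,C) \leq \psi_{\max}[G,\Sigma]$. The only difference is that you spell out the justifications (via chromatic numbers) of the two facts the paper simply quotes, namely that re-signing can be absorbed into the domain and that signed cliques are \tec{} cliques.
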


\begin{proof}
Again, by definition, the chromatic number of a signed graph is smaller than its achromatic number.
Since every signed clique is also a \tec{} clique, for every signed clique $[K,\Pi]$,
$[G,\Sigma] \rightarrow_s [K,\Pi]$ implies $(G,C) \rightarrow_2 (K,D_\Pi)$
for some \tec{} graph $(G,C) \in [G,\Sigma]$ and $D_\Pi = \Pi$, which in turn implies
$\psi_s[G,\Sigma] \leq \psi_2(G,C) \leq \psi_{\max}[G,\Sigma]$.
%
\end{proof}

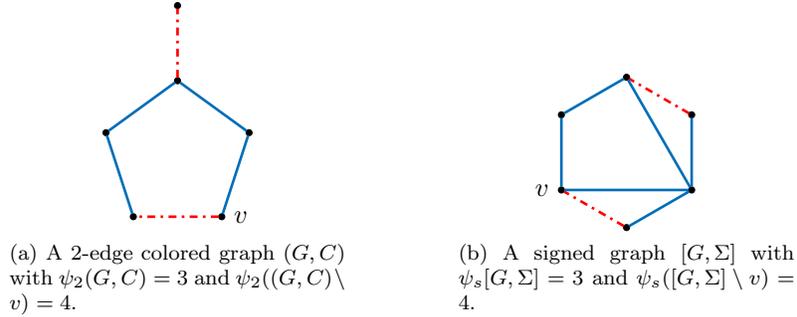
\begin{figure}
  \centering
  \subfloat[A 2-edge colored graph $(G,C)$ with $\psi_2(G,C) = 3$ and $\psi_2((G,C) \setminus v) = 4$.]{
  \begin{tikzpicture}
    \tikzstylemacro{}

    \node[n] (a) at (90:1) {};
    \node[n] (b) at (162:1) {};
    \node[n] (c) at (234:1) {};
    \node[n,label=right:$v$] (d) at (306:1) {};
    \node[n] (e) at (378:1) {};
    \node[n] (f) at (90:2) {};

    \node (i1) at (-2,0) {};
    \node (i2) at (2,0) {};

    \draw (a) edge[r] (f) edge[g] (e);
    \draw (b) edge[g] (a) edge[g] (c);
    \draw (d) edge[r] (c) edge[g] (e);

  \end{tikzpicture}
  \label{ct-ex-indu-tec}
 }
  \hskip 1.5cm
 \subfloat[A signed graph \text{$[G,\Sigma]$} with $\psi_s\text{$[G,\Sigma]$} = 3$ and $\psi_s(\text{$[G,\Sigma]$} \setminus v) = 4$.]{
  \begin{tikzpicture}
    \tikzstylemacro{}

    \node[n] (a) at (90:1) {};
    \node[n] (b) at (150:1) {};
    \node[n,label=left:$v$] (c) at (210:1) {};
    \node[n] (d) at (270:1) {};
    \node[n] (e) at (330:1) {};
    \node[n] (f) at (30:1) {};

    \node (i1) at (-2,0) {};
    \node (i2) at (2,0) {};

    \draw (f) edge[r] (a) edge[g] (e);
    \draw (b) edge[g] (a) edge[g] (c);
    \draw (d) edge[r] (c) edge[g] (e);
    \draw (e) edge[g] (a) edge[g] (c);

  \end{tikzpicture}
  \label{ct-ex-indu-signed}
}
\caption{Deleting a vertex in a \tec{} graph or in a signed graph.}
\label{ct-ex-indu}
\end{figure}

An interesting property of the achromatic number of
ordinary graphs is that for every graph $G$ and every vertex $v \in V(G)$, $\psi(G \setminus v) \leq \psi(v)$~\cite{Chartrand}.
However, this is no longer true for \tec{} graphs and signed graphs.
Indeed, one can check that removing the vertex $v$ in Figures~\ref{ct-ex-indu-tec} and~\ref{ct-ex-indu-signed}
increases the corresponding achromatic number by one.

This equality still holds for some particular vertices.
Let $(G,C)$ be a \tec{} graph and $u$, $v$ be two vertices of $V(G)$.
We say that $u$ and $v$ are {\em twins} if $u$ and $v$ have the same colored neighborhood, \ie
for every vertex $w\in V(G)$, $uw\in E(G)$ if and only if $vw\in E(G)$ and
$uw\in C$ if and only if $vw\in C$.
In that case, we say that $v$ is a {\em twin vertex} of $u$.
Similarly, two vertices $u$ and $v$ of a signed graph $[G,\Sigma]$
are {\em twins} if they are twins in the \tec{} graph $(G,C_\Sigma)$ with $C_\Sigma:=\Sigma$, or if they are twins in the \tec{} graph obtained from $(G,C_\Sigma)$ after re-signing at $u$ (\ie up to re-signing one of them, they are twins in a representative of $[G,\Sigma]$). 


\begin{prop}
  \label{achrom-induced-classe}
For every \tec{} graph $(G,C)$ and every vertex $v\in V(G)$,
if $v$ has a twin vertex $u$, then $\psi_2((G,C)\setminus v)$ $\leq$ $\psi_2(G,C)$.
For every signed graph $[G,\Sigma]$ and every vertex $v\in V(G)$,
if $v$ has a twin vertex $u$, then
$\psi_s([G,\Sigma]\setminus v)$ $\leq$ $\psi_s[G,\Sigma]$
and $\psi_{\max}([G,\Sigma]\setminus v)$ $\leq$ $\psi_{\max}[G,\Sigma]$.
\end{prop}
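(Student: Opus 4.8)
The plan is to prove the \tec{} statement first by a direct argument, and then to deduce the two signed statements from it by carefully choosing a representative of $[G,\Sigma]$. For the \tec{} part, fix $(G,C)$ with twins $u,v$. Since $(G,C)$ has no loop, instantiating the twin condition at $w=v$ gives $uv\notin E(G)$, so that $N_G(u)=N_G(v)$ and $u\notin N_G(v)$. Let $c$ be a complete $k$-coloring of $(G,C)\setminus v$ with $k=\psi_2((G,C)\setminus v)$, and extend it to $V(G)$ by setting $c(v):=c(u)$; I would then check that this is a complete $k$-coloring of $(G,C)$, which yields $\psi_2(G,C)\ge k$.

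Checking the extension amounts to three verifications. Properness holds because every neighbour $w$ of $v$ lies in $N_G(u)$, so $uw\in E((G,C)\setminus v)$ and hence $c(w)\ne c(u)=c(v)$. Sign-consistency holds because each edge incident with $v$ has the form $vw$ with $w\in N_G(u)$, and, $u$ and $v$ being twins, $vw$ has the same sign as $uw$; since $c(v)=c(u)$, the edge $vw$ joins the same pair of colors as $uw$ and carries the same sign, so no color pair becomes inconsistent and no new color pair appears. Completeness holds because any two colors $i,j$ are witnessed in $(G,C)\setminus v$ by two non-identifiable vertices, \ie two vertices on a common edge or a common $UP_3$; these substructures persist in $(G,C)$ and no color changes, so $i$ and $j$ remain in conflict.

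The two signed statements reduce to the \tec{} one once we can realize the relevant data on a representative of $[G,\Sigma]$ in which $u$ and $v$ are still \tec{} twins; this representative-matching is the step I expect to be the crux. The key observation is that, starting from a representative in which $u,v$ are twins, re-signing at a vertex set $T$ again gives a representative in which $u,v$ are twins if and only if $|T\cap\{u,v\}|\in\{0,2\}$, since the edges $uw$ and $vw$ flip simultaneously exactly when $T$ contains both or neither of $u,v$. By hypothesis $u,v$ are \tec{} twins in some representative $(G,\tilde C)$ of $[G,\Sigma]$ (with $\tilde C=\Sigma$, or $\Sigma$ re-signed at $u$). Any representative $(G\setminus v,C')$ of $[G,\Sigma]\setminus v$ is obtained from $(G\setminus v,\tilde C|_{G\setminus v})$ by re-signing at some $S\subseteq V(G\setminus v)$; re-signing $(G,\tilde C)$ at $T:=S$ when $u\notin S$, and at $T:=S\cup\{v\}$ when $u\in S$, then yields a representative $(G,C^\star)\in[G,\Sigma]$ with $C^\star|_{G\setminus v}=C'$ (re-signing at $v$ does not affect $G\setminus v$) and with $u,v$ still \tec{} twins (in both cases $|T\cap\{u,v\}|\in\{0,2\}$).

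With this device both statements follow. For $\psi_{\max}$, choose $(G\setminus v,C')\in[G,\Sigma]\setminus v$ attaining $\psi_{\max}([G,\Sigma]\setminus v)$, build $(G,C^\star)$ as above, and apply the \tec{} statement to the twins $u,v$ of $(G,C^\star)$: since $(G,C^\star)\setminus v=(G\setminus v,C')$, we get $\psi_{\max}([G,\Sigma]\setminus v)=\psi_2(G\setminus v,C')\le\psi_2(G,C^\star)\le\psi_{\max}[G,\Sigma]$. For $\psi_s$, take a complete $k$-coloring of $[G,\Sigma]\setminus v$ with $k=\psi_s([G,\Sigma]\setminus v)$, realized on a representative $(G\setminus v,C')$, build $(G,C^\star)$ as above, and extend the coloring by $c(v):=c(u)$ exactly as in the \tec{} case; validity is verified identically, while for completeness one replaces ``common edge or common $UP_3$'' by ``common edge or common $UC_4$'', these substructures again surviving the addition of $v$. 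The resulting quotient is a signed clique of order $k$, so $\psi_s[G,\Sigma]\ge k$. The genuine obstacle throughout is the representative-matching: simply extending the signature of the colored representative need not keep $u$ and $v$ twins, and it is the parity of $T\cap\{u,v\}$ that repairs this.
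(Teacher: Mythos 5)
Your proof is correct and follows essentially the same route as the paper: extend a complete coloring of the smaller graph by setting $c(v):=c(u)$, and in the signed case first pass to a representative of $[G,\Sigma]$ restricting to the colored representative of $[G,\Sigma]\setminus v$ in which $u$ and $v$ remain twins. Your parity analysis of $T\cap\{u,v\}$ is simply a careful elaboration of the paper's one-line remark that one may need to ``re-sign at $v$'' to restore the twin relation, so the two arguments coincide in substance.
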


\begin{proof}
Let $\alpha$ be a complete coloring of $(G,C)\setminus v$ using $\psi_2((G,C)\setminus v)$ colors.
By setting $\alpha(v)=\alpha(u)$, $\alpha$ clearly extends to a complete coloring of $(G,C)$,
which implies $\psi_2((G,C)\setminus v)$ $\leq$ $\psi_2(G,C)$. 
The same argument clearly implies the two inequalities for signed graphs, after maybe re-signing at $v$ so that $u$ and $v$ are twins in the \tec{} representative of $[G,\Sigma]$.
\end{proof}

\medskip

Before stating our results, we give some properties of the \tec{} and signed cliques as they are at the center
of the definition of all achromatic numbers we have introduced.

As observed before, any two vertices of a \tec{} clique or of a signed clique are
not identifiable.

\begin{obs}
\label{obs:2ecclique}
In a \tec{} clique $(K,D)$, every two vertices $u$ and $v$ satisfy at least one of the following:
  \begin{itemize}
    \item $uv \in E(G)$,
    \item $u$ and $v$ are end vertices of the same $UP_3$.
  \end{itemize}
This implies in particular that the diameter of $K$ is at most $2$.
\end{obs}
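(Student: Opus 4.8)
The plan is to derive the observation directly from the definition of a \tec{} clique together with the identifiability criterion recalled earlier in the excerpt. By definition, a \tec{} clique $(K,D)$ is its own unique homomorphic image up to isomorphism. In particular, no pair of its vertices can be identified: identifying an identifiable pair would produce a \tec{} graph of strictly smaller order onto which $(K,D)$ maps by a surjective homomorphism, contradicting uniqueness. Equivalently, and as stated just before the observation, every two vertices of $(K,D)$ are \emph{not} identifiable. I would take this as the starting point.

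Next I would invoke the characterization of identifiability: two vertices are identifiable precisely when they do not belong to a common edge \emph{and} do not belong to a common $UP_3$. Taking the contrapositive, any non-identifiable pair must lie on a common edge or on a common $UP_3$. Applying this to an arbitrary pair $u,v$ of vertices of $(K,D)$ gives exactly the two alternatives of the statement: either $uv\in E(K)$, or $u$ and $v$ lie on a common $UP_3$. In the second case, if in addition $uv\notin E(K)$, then $u$ and $v$ cannot be two consecutive vertices of that $UP_3$, so they must be its two end vertices; this is precisely the second bullet.

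Finally, the diameter bound follows at once. If $uv\in E(K)$ then $u$ and $v$ are at distance $1$, while if $u$ and $v$ are the end vertices of a $UP_3$ of the form $u\,w\,v$, then $w$ is a common neighbour and $u,v$ are at distance at most $2$. Since this holds for every pair of vertices, $\mathrm{diam}(K)\le 2$.

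I do not expect a genuine obstacle here, as the observation is a direct unfolding of the definitions; the only point deserving a word of care is the remark that, for a \emph{non-adjacent} pair, membership in a common $UP_3$ forces the two vertices to be its end vertices (the middle vertex being adjacent to both), which is what lets us pass cleanly from ``belong to the same $UP_3$'' to ``are end vertices of the same $UP_3$'' in the statement.
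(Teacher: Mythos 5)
Your proof is correct and takes essentially the same route the paper does: the observation is stated there without proof, as an immediate consequence of the preceding remarks that no two vertices of a \tec{} clique are identifiable and that a pair is identifiable exactly when it lies on no common edge and no common $UP_3$, which is precisely your contrapositive argument. Your added care in passing from ``belong to the same $UP_3$'' to ``are end vertices of the same $UP_3$'' for a non-adjacent pair (the middle vertex being adjacent to both ends), and the resulting distance-at-most-$2$ bound, correctly fills in the details the paper leaves implicit.
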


\begin{thm}[Naserasr, Rollov\'{a} and Sopena~\cite{Naserasr2014}]
In a signed clique $[K,\Pi]$, every two vertices $u$ and $v$ satisfy at least one of the following:
  \begin{itemize}
    \item $uv \in E(G)$,
    \item $u$ and $v$ are antipodal vertices of the same $UC_4$.
  \end{itemize}
\end{thm}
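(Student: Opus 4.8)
The plan is to reduce the statement to the characterization of identifiable vertices. The defining property of a signed clique $[K,\Pi]$ is that its only surjective homomorphic image, up to isomorphism, is itself; in particular, \emph{no two vertices of $[K,\Pi]$ are identifiable}, since identifying an identifiable pair would produce a surjective homomorphism onto a signed graph of strictly smaller order. Fixing two vertices $u$ and $v$, if they are adjacent then $uv \in E(K)$ and the first alternative holds, so I may assume $u$ and $v$ are non-adjacent. It then remains to show that a non-adjacent, non-identifiable pair consists of two antipodal vertices of a common $UC_4$. Recall that, for non-adjacent $u$ and $v$, being identifiable means exactly that one can re-sign $[K,\Pi]$ so that no $UP_3$ has $u$ and $v$ as its endpoints; the whole content is therefore to analyse when this is \emph{impossible}.

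The main tool will be a parity bookkeeping over the common neighbours of $u$ and $v$. For a common neighbour $w$, the path $u\,w\,v$ is either a $BP_3$ or a $UP_3$ in the current representative. First I would record two elementary facts about re-signing. For two common neighbours $w_1$ and $w_2$, the edge set of the $4$-cycle $u\,w_1\,v\,w_2\,u$ is the disjoint union of the edge sets of the two paths $u\,w_1\,v$ and $u\,w_2\,v$; hence this cycle is a $UC_4$ if and only if exactly one of these two paths is a $UP_3$, and since being a $UC_4$ is invariant under re-signing, the \emph{relative} parity of the two paths is re-signing invariant as well. Second, re-signing at $u$ flips the sign of $uw$ for every common neighbour $w$ while leaving $wv$ untouched, so it switches \emph{simultaneously} the type ($BP_3$ versus $UP_3$) of all the paths $u\,w\,v$ at once.

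With these facts in hand, I would argue by contradiction. Suppose $u$ and $v$ are non-adjacent and are \emph{not} antipodal vertices of any $UC_4$. By the first fact, this means that in the canonical representative all the paths $u\,w\,v$ through common neighbours $w$ share the same type. If they are all $UP_3$'s, re-sign at $u$ to turn them all into $BP_3$'s by the second fact; if they are all $BP_3$'s, do nothing. In the resulting representative no $UP_3$ joins $u$ and $v$ (and none can avoid a common neighbour, since a $UP_3$ with endpoints $u,v$ has its middle vertex adjacent to both), so $u$ and $v$ become identifiable, contradicting the clique hypothesis. Therefore two common neighbours $w_1,w_2$ yield paths of opposite type, and the $4$-cycle $u\,w_1\,v\,w_2\,u$ is a genuine $UC_4$ (its four vertices are distinct because $w_1\neq w_2$ are common neighbours of the distinct vertices $u,v$) in which $u$ and $v$ are antipodal, as required.

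The step I expect to be the main obstacle is the careful treatment of re-signing: one must isolate that it is the relative parity of the length-two paths, and not their absolute types, that is an invariant, and verify that re-signing at the single vertex $u$ realises a simultaneous global flip of all of them. Handling the degenerate cases cleanly — where $u$ and $v$ have no common neighbour at all, so that they are trivially identifiable — is a minor but necessary point that the contradiction argument must subsume.
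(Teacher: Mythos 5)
Your proof is correct, but it takes a different route from the paper, which in fact gives no proof of this statement at all: it is quoted from Naserasr, Rollov\'a and Sopena~\cite{Naserasr2014}, and within the paper it is an immediate consequence of two ingredients already on record, namely the observation that no two vertices of a signed clique are identifiable, and the cited characterization that two vertices of a signed graph are identifiable if and only if they do not belong to a common edge or a common $UC_4$. You instead reprove the nontrivial direction of that characterization from first principles, and your bookkeeping is sound: for non-adjacent $u,v$ and common neighbours $w_1 \neq w_2$, the $4$-cycle $u\,w_1\,v\,w_2\,u$ has edge set the disjoint union of the two paths' edge sets, so it is a $UC_4$ exactly when the paths $u\,w_1\,v$ and $u\,w_2\,v$ have opposite type, making the relative parity a re-signing invariant; re-signing at $u$ alone flips every such path's type simultaneously (legitimate since $u \not\sim v$, so no edge $uv$ interferes); and any $UP_3$ with endpoints $u,v$ necessarily runs through a common neighbour, so once all paths are made balanced the pair is identifiable, contradicting the clique hypothesis --- with the no-common-neighbour case correctly subsumed, since such a pair is trivially identifiable. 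What your route buys is self-containedness: it replaces the black-box citation by an elementary parity-and-re-signing argument (essentially the argument one would find in~\cite{Naserasr2014} itself). What the paper's route buys is brevity, since the statement follows in one line from the identifiability criterion it has already quoted together with the definition of a signed clique as a graph admitting no proper surjective homomorphic image.
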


Let us remark that $\psi_2$ and $\psi_s$ can be viewed
as the largest order of a \tec{} clique or of a signed clique obtained by the following algorithm: while there exists two vertices identifiable, identify them. In the signed case, we may need to re-sign before identifying vertices.

We can construct a signed clique from a \tec{} clique by the following construction.

\begin{lem}
\label{2ectosignedclique}
	For a \tec{} graph $(K,D)$, if $(K',D')$ is the \tec{} graph obtained by adding one vertex $z$ to $(K,D)$ and adding, for every $u \in V(K)$, a positive edge $uz$, then $(K,D)$ is a \tec{} clique if and only if the signed graph $[K',\Sigma']$ where $\Sigma' = D'$ is a signed clique.
\end{lem}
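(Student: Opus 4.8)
The plan is to work directly with the two structural characterizations of cliques already recorded: Observation~\ref{obs:2ecclique}, which says that in a \tec{} clique every two vertices are either adjacent or the end vertices of a common $UP_3$, and the theorem of Naserasr, Rollov\'{a} and Sopena, which says that in a signed clique every two vertices are either adjacent or antipodal vertices of a common $UC_4$. Since being a clique is equivalent to no two vertices being identifiable, I will prove both implications by checking, pair by pair, that vertices fail to be identifiable in one graph exactly when the corresponding pair fails to be identifiable in the other. The single mechanism driving everything is that the apex vertex $z$, joined to all of $K$ by positive edges, turns a $UP_3$ of $(K,D)$ with end vertices $u,v$ into the $4$-cycle $u\,w\,v\,z$ (where $w$ is the middle vertex of the path): the two edges $vz$ and $zu$ are positive, so the parity of the number of negative edges of this cycle equals the parity of the path $u\,w\,v$, and hence the cycle is unbalanced precisely when the path is a $UP_3$.

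For the forward implication, assume $(K,D)$ is a \tec{} clique and take any two vertices $x,y$ of $K'$. If one of them is $z$, they are adjacent by construction, hence not identifiable. Otherwise $x,y\in V(K)$, and Observation~\ref{obs:2ecclique} gives that either $xy\in E(K)$, in which case $x$ and $y$ remain adjacent in $K'$, or $x,y$ are the end vertices of a $UP_3$ $x\,w\,y$ of $(K,D)$. In the latter case the construction above exhibits the $UC_4$ $x\,w\,y\,z$ in which $x$ and $y$ are antipodal. In every case no two vertices of $K'$ are identifiable in the signed sense, so $[K',\Sigma']$ is a signed clique.

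For the converse I argue by contraposition: assume $(K,D)$ is not a \tec{} clique and fix two identifiable vertices $u,v\in V(K)$, so that $u$ and $v$ are non-adjacent and are not the end vertices of any common $UP_3$. I must show that $u$ and $v$ are identifiable in $[K',\Sigma']$, \ie that they are non-adjacent (immediate, since adding $z$ creates no edge inside $V(K)$) and not antipodal vertices of any $UC_4$. Suppose to the contrary that $u\,p\,v\,q$ is such a $UC_4$ in $[K',\Sigma']$. Because balancedness of a cycle is invariant under re-signing, I may test this $4$-cycle in the canonical representative $(K',D')$. If $z\notin\{p,q\}$ the whole cycle lies in $K$; splitting it into the two length-$2$ paths $u\,p\,v$ and $u\,q\,v$, their negative-edge counts sum to an odd number, so one of them is a $UP_3$ of $(K,D)$ with end vertices $u,v$, a contradiction. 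If instead $z\in\{p,q\}$, say $q=z$, then the edges $vz$ and $zu$ are positive, so $u\,p\,v\,z$ is unbalanced only if $u\,p\,v$ carries an odd number of negative edges, again producing a $UP_3$ between $u$ and $v$ in $(K,D)$ and contradicting the choice of $u,v$.

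The routine content is the parity bookkeeping on $4$-cycles, so I expect the only delicate points to be conceptual rather than computational. The first is making sure the signed notion of identifiability is tested correctly: since it depends only on adjacency and on the family of unbalanced cycles, both of which are re-signing invariant, it is legitimate to carry out the entire $UC_4$ analysis in the fixed representative $(K',D')$ without worrying about which signature realises a given configuration. The second, closely related, is the case split on whether the apex $z$ lies on the purported $UC_4$; the key---and easily overlooked---point is that the positive edges incident with $z$ contribute nothing to the sign of a cycle through $z$, which is exactly what makes $z$ faithfully transport the $UP_3$/$UC_4$ correspondence in both directions.
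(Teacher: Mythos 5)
Your proof is correct and follows essentially the same route as the paper's: both directions rest on the $UP_3 \leftrightarrow UC_4$ correspondence through the positive apex $z$, using Observation~\ref{obs:2ecclique} and the Naserasr--Rollov\'{a}--Sopena characterization of identifiability. The only cosmetic differences are that you phrase the converse contrapositively and spell out the parity bookkeeping (including the case split on whether $z$ lies on the $UC_4$ and the re-signing invariance of balancedness) that the paper leaves implicit in its phrase ``which implies that they are the end vertices of some $UP_3$ in $(K,D)$.''
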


\begin{proof}
Suppose first that $(K,D)$ is a \tec{} clique and
let $u$ and $v$ be any two vertices of $K'$. 
If $u=z$ or $v=z$, then there is an edge $uv$ by construction, so that $u$ and $v$ are not identifiable. 
Otherwise, both $u$ and $v$ belong to $K$. 
If there is no edge $uv$ in $K$, then $u$ and $v$ are the end vertices of some $UP_3$ of $(K,D)$, say $uwv$. 
Then, by construction, $uwvz$ is a $UC_4$ in $(K',D')$ and thus in $[K',\Sigma']$, so that $u$ and $v$ are not identifiable. 
This implies that $[K',\Sigma']$ is a signed clique.

Suppose now that $[K',\Sigma']$ is a signed clique and let $u$ and $v$ be any two vertices of $K$, with $u\neq z$ and $v\neq z$.
Since $u$ and $v$ are not identifiable in $[K',\Sigma']$, either $uv$ is an edge of $[K',\Sigma']$, and thus of $(K,D)$,
or $u$ and $v$ are antipodal vertices in some $UC_4$ of $[K',\Sigma']$, which implies that they are the end vertices of some $UP_3$
in $(K,D)$. In both cases, $u$ and $v$ are not identifiable in $(K,D)$, which implies that $(K,D)$ is a \tec{} clique.
\end{proof}


The problem 
of deciding whether the achromatic number of a graph is at least $k$,
for some integer $k$,
has been shown to be NP-complete even when restricted to small classes of graphs
in~\cite{Yannakakis1980} and~\cite{Bodlaender1989}. We recall the definition of the problem and
these results below.


\PROBLEM{Achromatic number}{A graph $G$ and an integer $k$}{Is $\psi(G) \geq k$?}

\begin{thm}[Yannakakis and Gavril\cite{Yannakakis1980}]
  \label{np-complete-th0}
  The problem {\sc Achromatic number} is NP-complete even when restricted to complements of bipartite graphs.
\end{thm}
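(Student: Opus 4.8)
The plan is to establish membership in NP and then prove NP-hardness by a reduction from the problem of finding a \emph{minimum maximal matching} in a bipartite graph, which is known to be NP-complete (this is essentially the edge-dominating-set result of Yannakakis and Gavril, using the classical fact that a minimum edge dominating set and a minimum maximal matching have the same size). Membership in NP is immediate: a complete $k$-coloring is a certificate of polynomial size whose properness and completeness can both be checked in polynomial time by inspecting every edge and every pair of colors.

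For the hardness I would exploit the following structural observation about complements of bipartite graphs. Let $B$ be a bipartite graph with parts $X$ and $Y$, and let $G=\overline{B}$ be its complement on the same vertex set, of size $n=|X|+|Y|$. In $G$, both $X$ and $Y$ induce cliques, so in any proper coloring of $G$ each color class contains at most one vertex of $X$ and at most one vertex of $Y$. A color class $\{x,y\}$ with $x\in X$ and $y\in Y$ is legal exactly when $xy\notin E(G)$, that is, when $xy\in E(B)$. Hence the nontrivial color classes of a proper coloring of $G$ form a matching $M$ of $B$, and conversely every matching of $B$ yields a proper coloring of $G$ with exactly $n-|M|$ color classes.

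The key step is to decide which of these colorings are \emph{complete}. One checks that any two color classes are automatically in conflict except possibly two singletons $u\in X$ and $v\in Y$: a singleton and a pair, or two pairs, always share an $X$-edge or a $Y$-edge of $G$, and two singletons lying in the same part are adjacent because that part is a clique. For singletons $u\in X$ and $v\in Y$ there is an edge of $G$ between them iff $uv\notin E(B)$, so completeness can fail only if two unmatched vertices $u\in X$, $v\in Y$ satisfy $uv\in E(B)$ --- which is precisely the situation where $M$ is \emph{not} maximal. Therefore the complete colorings of $G=\overline{B}$ are exactly those arising from \emph{maximal} matchings of $B$, and consequently $\psi(\overline{B})=n-\beta(B)$, where $\beta(B)$ is the minimum size of a maximal matching of $B$.

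Given this identity the reduction is immediate: from an instance $(B,\ell)$ of minimum maximal matching on bipartite graphs we output the instance $(\overline{B},\,n-\ell)$ of \textsc{Achromatic number}; since $\overline{B}$ is by construction the complement of a bipartite graph and $\psi(\overline{B})\geq n-\ell$ iff $\beta(B)\leq\ell$, this is a valid polynomial-time many-one reduction. The main obstacle is the completeness analysis of the third paragraph, namely establishing that completeness of the coloring corresponds exactly to maximality of the matching, since that is where the combinatorial content lies; once it is in place, the equivalence $\psi(\overline{B})=n-\beta(B)$ and the reduction itself are routine.
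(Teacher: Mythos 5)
Your proof is correct, and since the paper states this theorem only as a cited result of Yannakakis and Gavril without reproducing a proof, the right comparison is with the original reference: your argument --- the identity $\psi(\overline{B}) = n - \beta(B)$ relating complete colorings of $\overline{B}$ to maximal matchings of the bipartite graph $B$, combined with the NP-completeness of minimum maximal matching on bipartite graphs --- is precisely the reduction used there. The completeness case analysis (two pairs, pair versus singleton, and two singletons, with failure possible only for two unmatched singletons on opposite sides joined by an edge of $B$) is sound, so nothing is missing.
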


\begin{thm}[Bodlaender~\cite{Bodlaender1989}]
  \label{np-complete-th1}
  The problem {\sc Achromatic number} is NP-complete even when restricted to graphs which are simultaneously connected interval graphs and co-graphs.
\end{thm}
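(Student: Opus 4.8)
Since this is a result of Bodlaender~\cite{Bodlaender1989} the paper states it without proof, so I sketch the approach I would take to reprove it. Membership in NP is immediate and independent of the graph class: a complete $k$-colouring is itself a polynomial-size certificate, and checking that it is proper (scan every edge) and complete (for each of the $\binom{k}{2}$ colour pairs exhibit a bichromatic edge) takes polynomial time; restricting the input to a subclass only makes the problem easier to place in NP. The entire difficulty lies in proving NP-hardness \emph{while} keeping the constructed graph inside the class of connected graphs that are simultaneously interval graphs and cographs.

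The plan is to exploit the fact that a graph is both an interval graph and a cograph exactly when it is chordal and $P_4$-free, i.e.\ a trivially perfect graph, and that such graphs are built from single vertices by disjoint union and by adding a universal vertex. I would use as the only gadget a disjoint union of cliques $K_{s_1},\dots,K_{s_t}$ together with a single extra universal vertex $z$; this graph is connected, interval, and a cograph by construction, and it is the image of the reduction. The key observation is that in such a graph the ``in conflict'' relation between colour classes is completely governed by clique co-occurrence: the colour of $z$ is forced to be private to $z$ (since $z$ is adjacent to everything) and is automatically in conflict with every other colour, while two colours other than that of $z$ are joined by a bichromatic edge if and only if some clique receives both of them. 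Hence a complete colouring with $m+1$ colours corresponds \emph{exactly} to a choice of sets $S_1,\dots,S_t$ with $|S_i|=s_i$, $\bigcup_i S_i=\setk{m}$, and every pair of $\setk{m}$ contained in some $S_i$ --- that is, to a pair-covering design whose block sizes are the prescribed clique sizes, with $\psi$ of the gadget equal to $1+\max m$.

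I would then reduce a suitable NP-hard number/covering problem to the question of whether such a covering design on $\setk{m}$ exists with the given block sizes, taking $k=m+1$ as the target number of colours: a yes-instance produces clique sizes realising the design, and conversely any complete $k$-colouring of the gadget yields a solution by reading off the colour set of each clique. The necessary counting inequality $\sum_i \binom{s_i}{2}\ge\binom{m}{2}$ already shows that the design is forced to be essentially tight, which is what lets the clique sizes encode a genuinely hard constraint rather than a trivially satisfiable one.

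The main obstacle is precisely this last step. The target class is extremely rigid --- no induced $P_4$ and no induced $C_4$ --- so the long paths and $4$-cycles that generic achromatic-number gadgets rely on are simply unavailable, and one cannot transplant an existing hardness proof; hardness must be forced purely through the clique sizes and the single universal vertex. Making the arithmetic work out so that the maximum number of colours is attained \emph{exactly} on yes-instances, and controlling the precise value of $\psi$ rather than merely a bound on it, is the delicate design-theoretic heart of the argument.
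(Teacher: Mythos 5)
The paper offers no proof of this statement: it is quoted verbatim from Bodlaender's 1989 paper, so there is no internal argument to compare yours against, and your proposal has to be judged on its own merits. The parts you do carry out are sound. NP membership is as routine as you say. Your identification of the class is correct: a graph is simultaneously an interval graph and a cograph exactly when it is chordal (hence $C_4$-free) and $P_4$-free, i.e.\ trivially perfect, and such graphs do admit nested-interval representations. Your gadget --- disjoint cliques $K_{s_1},\dots,K_{s_t}$ joined to one universal vertex $z$ --- stays in the class and is connected, and your translation is exact: since $z$'s colour is private and automatically in conflict with all others, a complete $(m+1)$-colouring exists if and only if there are sets $S_1,\dots,S_t \subseteq \setk{m}$ with $\abs{S_i}=s_i$, $\bigcup_i S_i = \setk{m}$, and every pair of $\setk{m}$ inside some $S_i$. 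This correctly reduces the achromatic question on this graph class to a pair-covering design problem with prescribed block sizes.

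The genuine gap is exactly the step you defer: ``I would then reduce a suitable NP-hard number/covering problem'' is not a reduction, and the NP-hardness of the design problem you arrive at is not an off-the-shelf fact you can invoke --- it \emph{is} the theorem, merely rephrased. Worse, your tightness observation cuts against you rather than for you: in the tight regime $\sum_i \binom{s_i}{2} = \binom{m}{2}$ with uniform block sizes the question degenerates into classical design existence with purely arithmetic answers (for instance, all $s_i = 3$ asks for a Steiner triple system on $m$ points, which exists if and only if $m \equiv 1,3 \pmod 6$ --- polynomially decidable), so hardness does not come for free from tightness and must be engineered through a carefully chosen mix of block sizes encoding a partition-type source problem, with both directions of the equivalence verified. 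That encoding is the entire content of Bodlaender's argument, and it is absent from your sketch. As it stands you have proved a correct and useful \emph{characterization} of $\psi$ on these gadgets, but not NP-completeness; to close the gap you must name the source problem (a 3-{\sc Partition}-style problem is the natural candidate, as in this paper's own Theorem~\ref{np-complete-th3} reductions), exhibit the clique sizes it induces, and show that a covering family $S_1,\dots,S_t$ exists if and only if the source instance is a yes-instance.
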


We will show that a number of problems related to the achromatic numbers
we have introduced are NP-complete.
We first define the following problem (the name in brackets is the acronym of the problem).

\PROBLEM{\tec{} graph achromatic number [2ec-an]}{A \tec{} graph $(G,C)$ and an integer $k$}
{Is $\psi_2(G,C) \geq k$?}

%

Since any graph $G$ can be regarded as the \tec{} graph $(G,\varnothing)$,
and every complete coloring of $G$ as a complete coloring of $(G,\varnothing)$,
the problem {\sc 2ec-an} contains the problem {\sc Achromatic number}.
The following theorem thus easily follows from Theorems~\ref{np-complete-th0} and~\ref{np-complete-th1}.

\begin{thm}
  \label{np-complete-th2}
  The problem {\sc 2ec-an} is NP-complete even when restricted to graphs which are simultaneously connected interval graphs and co-graphs 
  or to complements of bipartite graphs.
\end{thm}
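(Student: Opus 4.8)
The plan is to prove Theorem~\ref{np-complete-th2} by a direct reduction from the classical {\sc Achromatic number} problem, exploiting the fact that {\sc 2ec-an} is a genuine generalization. First I would observe the trivial containment: any ordinary graph $G$ may be viewed as the \tec{} graph $(G,\varnothing)$ in which every edge is positive. The point to verify is that this identification preserves the relevant combinatorial structure, namely that a mapping $\alpha : V(G) \to \setk{k}$ is a complete $k$-coloring of $G$ (in the sense of Harary and Hedetniemi) if and only if it is a complete $k$-coloring of the \tec{} graph $(G,\varnothing)$.

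The key step is to check both the coloring condition and the completeness condition under this identification. For the coloring condition, a proper vertex coloring of $G$ automatically satisfies the extra \tec{} requirement that all edges between color classes $i$ and $j$ share a sign, since in $(G,\varnothing)$ every edge is positive and hence trivially of one sign; so the notions of ``$k$-coloring of $G$'' and ``$k$-coloring of $(G,\varnothing)$'' coincide. For the completeness condition, I would unwind the definition given in the excerpt: two colors $i$ and $j$ are in conflict when there is an $i$-colored vertex and a $j$-colored vertex that are not identifiable. Since $(G,\varnothing)$ contains no negative edges, it contains no $UP_3$, so two vertices are non-identifiable precisely when they are adjacent in $G$. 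Thus colors $i$ and $j$ conflict in $(G,\varnothing)$ exactly when some edge of $G$ joins an $i$-colored vertex to a $j$-colored one, which is exactly Harary and Hedetniemi's definition of a complete coloring. Consequently $\psi_2(G,\varnothing) = \psi(G)$, and the decision problem {\sc Achromatic number} on input $(G,k)$ is equivalent to {\sc 2ec-an} on input $((G,\varnothing),k)$. This reduction is clearly polynomial (it only records an empty signature), so {\sc 2ec-an} is NP-hard; membership in NP is immediate, since a claimed complete $k$-coloring is a certificate verifiable in polynomial time.

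Finally, I would transfer the graph-class restrictions. The reduction does not alter the underlying graph $G$ at all, so if $G$ is a connected interval graph and a co-graph, or a complement of a bipartite graph, then $(G,\varnothing)$ has exactly that underlying graph. Hence the NP-completeness of {\sc Achromatic number} on these classes, established in Theorems~\ref{np-complete-th0} and~\ref{np-complete-th1}, carries over verbatim to {\sc 2ec-an} restricted to the same classes, yielding the theorem. I do not anticipate a genuine obstacle here; the only subtlety worth stating carefully is the equivalence $\psi_2(G,\varnothing) = \psi(G)$, and in particular making sure that the absence of negative edges rules out any $UP_3$ so that the \tec{} notion of identifiability collapses to ordinary adjacency. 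Everything else is bookkeeping around definitions already fixed in the excerpt.
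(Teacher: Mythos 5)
Your proposal is correct and follows essentially the same route as the paper: the paper also observes NP membership by checking a candidate coloring in polynomial time, and obtains hardness by regarding $G$ as the \tec{} graph $(G,\varnothing)$ so that {\sc 2ec-an} contains {\sc Achromatic number} on the same graph classes. Your additional verification that the absence of negative edges rules out any $UP_3$, collapsing identifiability to adjacency and giving $\psi_2(G,\varnothing)=\psi(G)$, is exactly the detail the paper leaves implicit in its preceding remark.
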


\begin{proof}
We can verify in polynomial type whether a coloring of $(G,C)$ on $q \geq k$ colors
is a proper complete coloring of $(G,C)$ or not. Thus {\sc 2ec-an} is in NP.
The result then follows by the above remark.
\end{proof}

We now define all the other decision problems that we will consider.

%
%
%
%
%
%

\PROBLEM{Signed graph achromatic number [Signed-an]}
{A signed graph $[G,\Sigma]$ and an integer $k$}
{Is $\psi_s[G,\Sigma] \geq k$?}

\PROBLEM{Signed graph max-achromatic number [Signed-max-an]}
{A signed graph $[G,\Sigma]$ and an integer $k$}
{Is $\psi_{\max}[G,\Sigma] \geq k$?}

\PROBLEM{Signed graph min-achromatic number [Signed-min-an]}
{A signed graph $[G,\Sigma]$ and an integer $k$}
{Is $\psi_{\min}[G,\Sigma] \geq k$ ?}

\PROBLEM{Graph \tec{} max-achromatic number [Max-2ec-an]}
{A graph $G$ and an integer $k$}
{Is $\psi_{\max}(G) \geq k$?}

\PROBLEM{Graph \tec{} min-achromatic number [Min-2ec-an]}
{A graph $G$ and an integer $k$}
{Is $\psi_{\min}(G) \geq k$?}

\PROBLEM{Graph signed max-achromatic number [Max-signed-an]}
{A graph $G$ and an integer $k$}
{Is $\psi_{\max}^{signed}(G) \geq k$?}

\PROBLEM{Graph signed min-achromatic number [Min-signed-an]}
{A graph $G$ and an integer $k$}
{Is $\psi_{\min}^{signed}(G) \geq k$?}

Our main results are gathered in the two following theorems,  and will be proved in the next section.

\begin{thm}
\label{np-complete-psis}
  The problem {\sc Signed-an} is NP-complete even when restricted to graphs which are simultaneously connected interval graphs and co-graphs 
  or to complements of bipartite graphs.
\end{thm}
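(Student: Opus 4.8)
The plan is to prove membership in NP and then reduce from {\sc Achromatic number} restricted to the two graph classes, whose NP-hardness is provided by Theorems~\ref{np-complete-th0} and~\ref{np-complete-th1}. Membership is routine: given a representative $(G,C) \in [G,\Sigma]$ together with a colouring on $q \ge k$ colours, one builds the quotient \tec{} graph in polynomial time and checks that it is a well-defined simple \tec{} graph, that the colouring is proper and sign-consistent, and that the quotient is a signed clique (every pair of its vertices is adjacent or antipodal in some $UC_4$); such a complete signed $q$-colouring is a polynomial certificate for $\psi_s[G,\Sigma] \ge k$.

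For the hardness, given an input graph $G$ of {\sc Achromatic number} lying in one of the two classes, I would output the signed graph $[G^+, \varnothing]$, where $G^+ = G + z$ is obtained by adding a universal vertex $z$ and $\varnothing$ denotes the balanced (all-positive) signature; this is exactly the universal positive vertex appearing in Lemma~\ref{2ectosignedclique}. Adding a universal vertex keeps $G^+$ in the relevant class: the complement merely gains an isolated vertex and stays bipartite, while for co-graphs it is a join with $K_1$ and the new vertex preserves both the interval representation and connectivity. The reduction is then completed by the equality $\psi_s[G^+, \varnothing] = \psi(G) + 1$, since this gives $\psi(G) \ge k$ if and only if $\psi_s[G^+, \varnothing] \ge k+1$.

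The lower bound $\psi_s[G^+, \varnothing] \ge \psi(G)+1$ is straightforward: an ordinary complete $\psi(G)$-colouring of $G$, extended by assigning a fresh colour to $z$, is a proper colouring of the all-positive $G^+$ whose quotient is an all-positive complete graph, and such a balanced complete graph is a signed clique of order $\psi(G)+1$. The difficulty is entirely in the reverse bound $\psi_s[G^+, \varnothing] \le \psi(G)+1$, i.e. in showing that re-signing gives no advantage; that $z$ is essential here is already visible from $[P_5, \varnothing] \rightarrow_s UC_4$, which folds the two ends of an all-positive $P_5$ into an unbalanced $4$-cycle and thus beats $\psi(P_5) = 3$.

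The crux, and the main obstacle, is the observation that a universal positive vertex neutralises re-signing. Consider a surjective signed homomorphism obtained by re-signing $G^+$ at a vertex set $R$ and then identifying colour classes. All edges joining $z$ to a fixed colour class map to a single edge of the image, hence receive a common sign in the re-signed representative; since $z$ is adjacent to every vertex, this forces all vertices of that class to agree on membership in $R$. Thus the re-signing is constant on colour classes and descends to a switching of the quotient. As the all-positive quotient is balanced and switching preserves balance, the image is a balanced signed graph, so a balanced signed clique; having no $UC_4$, the Naserasr--Rollov\'a--Sopena characterisation then forces every pair of its vertices to be adjacent, i.e. the image is a complete graph $K_m$. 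The preimage colouring is therefore an ordinary complete colouring of $G^+$, and since $z$ is universal its image is a single vertex adjacent to all others, so the remaining $m-1$ colour classes form an ordinary complete colouring of $G$; hence $m-1 \le \psi(G)$, giving $\psi_s[G^+,\varnothing] \le \psi(G)+1$ as required.
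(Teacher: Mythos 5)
Your proposal is correct and takes essentially the same approach as the paper: the identical reduction (add a universal vertex $z$ to $G$ with the all-positive signature, so that $\psi(G) \geq k$ if and only if $\psi_s[G^+,\varnothing] \geq k+1$), with the same forward direction and the same key observation that a positive universal vertex neutralises re-signing. The only cosmetic difference is in the upper bound, where you track the switching set on the source, note it is constant on colour classes, and conclude the image is balanced (hence $UC_4$-free, hence complete), while the paper re-signs the target clique around the image of $z$ and derives a contradiction from a hypothetical negative edge --- two phrasings of the same argument.
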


\begin{thm}
  \label{np-complete-th3}
  \label{th-np}
  The following problems are NP-complete:
  \begin{itemize}
    \setlength\itemsep{0mm}
    \item {\sc Signed-max-an}, even when restricted to connected diamond-free perfect graphs
    \item {\sc Max-2ec-an}, even when restricted to connected diamond-free perfect graphs
    \item {\sc Max-signed-an}, even when restricted to connected perfect graphs.
  \end{itemize}
\end{thm}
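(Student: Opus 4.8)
The plan is to establish membership in NP and then reduce from {\sc Achromatic number}, which by Theorems~\ref{np-complete-th0} and~\ref{np-complete-th1} is already NP-complete on perfect graphs and, in Bodlaender's case, on connected ones. Membership is routine for all three problems: a certificate is a signature $C$ (for {\sc Signed-max-an}, a representative of the class, together with the vertex switching witnessing that it lies in the class) plus a vertex coloring, and one checks in polynomial time that the coloring is a proper \tec{} coloring for those signs and that every pair of colors is in conflict, \ie that it is a surjective homomorphism onto a \tec{} (\resp signed) clique of the claimed order. So the work is in the hardness reductions, and I would derive all three from a single \tec{} construction.

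The observation driving the construction is that choosing a signature $C$ amounts to deciding, for each $P_3$ $u w v$ with $uv\notin E$, whether $u w v$ becomes a $UP_3$ (opposite signs on $uw$ and $wv$) and hence puts the non-adjacent pair $u,v$ in conflict; thus $UP_3$'s can simulate edges as far as color conflicts are concerned, and this is exactly what lets me push the target graph into a diamond-free perfect class. Given a connected instance $G$ of {\sc Achromatic number}, I would build $G'$ with a subdivision-type gadget, subdividing each edge $uv$ into a path $u\,x_{uv}\,v$ (adding pendant structure if needed to calibrate the counts). The result is bipartite, hence perfect, triangle-free and therefore diamond-free, and connected whenever $G$ is. For completeness, from a complete $k$-coloring of $G$ I sign every subdivided path so that $u\,x_{uv}\,v$ is a $UP_3$, keep the $k$ original classes on $V(G)$, and color the subdivision vertices to create the remaining conflicts; this gives a complete coloring of $G'$ on $k$ colors plus a further number dedicated to subdivision vertices, so that $\psi_{\max}(G')\ge k'$ with an offset $k'-k$ computable from $G$.

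The other two problems follow from the \tec{} one. For {\sc Signed-max-an} I would output $[G',\Sigma']$ where $\Sigma'$ is the signature used in the completeness step: since $\psi_{\max}[G',\Sigma']\le\psi_{\max}(G')$ always while completeness realizes the target with a representative of $[G',\Sigma']$, the question $\psi_{\max}[G',\Sigma']\ge k'$ is equivalent to $\psi(G)\ge k$. For {\sc Max-signed-an} I would adjoin to $G'$ a single vertex $z$ joined to all others by positive edges, as in Lemma~\ref{2ectosignedclique}: that lemma turns \tec{} cliques into signed cliques with one extra vertex, giving $\psi_{\max}^{signed}(G'+z)=\psi_{\max}(G')+1$. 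The apex $z$ creates diamonds (any $P_3$ of $G'$ closes into a diamond through $z$) but preserves perfectness, which is precisely why this case is claimed only for connected perfect graphs rather than diamond-free ones.

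The main obstacle is soundness, \ie bounding the maximum over all signatures from above. Two classes are in conflict as soon as a single edge joins them, so a cunning signature could in principle spawn many unintended $UP_3$-conflicts --- in $G'$ these are the paths $x_{uv}\,u\,x_{uw}$ centred at an original vertex --- and inflate $\psi_{\max}(G')$ beyond the intended value. Ruling this out is where I expect the real work to be: I would use the rigidity of the gadget together with the defining constraint that all edges between two classes carry the same sign, so that in the bipartite, diamond-free graph $G'$ every $UP_3$ is either a subdivided edge of $G$ or is centred at an original vertex, and argue that in an optimal coloring the latter never yield conflicts that are not already forced. This would let any complete coloring of $G'$ project back to a complete coloring of $G$ on at least $k$ colors. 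Checking that the offset $k'-k$ is computed correctly and that the construction stays inside the stated graph classes are then the routine remaining verifications.
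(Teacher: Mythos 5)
There is a genuine gap, and it sits exactly where you flagged it: the soundness direction, \ie the upper bound on $\psi_{\max}(G')$ over \emph{all} signatures. Your claim that in an optimal coloring the $UP_3$'s centred at original vertices ``never yield conflicts that are not already forced'' is false for the subdivision gadget. Consider $G=K_{1,n}$, so $\psi(G)=2$ for all $n$, and let $G'$ be its subdivision (a spider with $n$ legs of length $2$). Choose a signature giving half the centre edges $ux_i$ sign $+$ and half sign $-$: any positive subdivision class and any negative subdivision class are then in conflict via a $UP_3$ $x_iux_j$ for free, and same-sign pairs of classes can be put in conflict by spending the leg edges $x_i\ell_i$, one per pair. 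With $t/2$ positive and $t/2$ negative classes this needs only $2\binom{t/2}{2}\leq n$ legs, so $\psi_{\max}(G')=\Omega(\sqrt{n})$ while $\psi(G)=2$. The inflation scales with quantities like $\sum_v\binom{d(v)}{2}$ rather than with $\psi(G)$, and the unintended conflicts can \emph{substitute} for the $G$-conflicts entirely, so no ``offset computable from $G$'' can make $\psi_{\max}(G')\geq k'$ equivalent to $\psi(G)\geq k$; calibrating the offset exactly would amount to computing $\psi_{\max}(G')$ itself. The same leak breaks your {\sc Signed-max-an} variant, since $\psi_{\max}[G',\Sigma']$ inherits the re-signing freedom, and note additionally that the asserted identity $\psi_{\max}^{signed}(G'+z)=\psi_{\max}(G')+1$ is stated without proof; Lemma~\ref{2ectosignedclique} only matches cliques, not optima of homomorphic images, so both inequalities would still need an argument.

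The paper avoids this trap by not reducing from {\sc Achromatic number} at all. It reduces from {\sc 3-partition}, building $H(\mathcal{I})$ from stars, a negative target clique, and a large grid with carefully tuned parameters $p,q,r$, and --- this is the key idea your proposal is missing --- it proves a \emph{signature-free} statement (Lemma~\ref{np-lemma-tec}): any surjective homomorphism of the \emph{underlying} graph $H(\mathcal{I})$ onto a graph of order greater than $k(\mathcal{I})$ and diameter at most $2$ already forces a solution of $\mathcal{I}$. Since by Observation~\ref{obs:2ecclique} every \tec{} clique (and every signed clique) has underlying diameter at most $2$, this single counting lemma dominates the maximum over all signatures and all re-signings simultaneously, which is precisely the uniformity your gadget lacks. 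Your NP-membership argument and the apex-vertex step for {\sc Max-signed-an} (adjoining $z$ and invoking Lemma~\ref{2ectosignedclique}, at the cost of losing diamond-freeness) do match the paper, but without a signature-free soundness lemma of this kind the core of the reduction does not go through.
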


For the three other problems it is easy to show that:

\begin{thm}
  \label{np-complete-th5}
  \label{np-complete-th7}
  \label{np-complete-th9}
The problems
{\sc Signed-min-an},
{\sc Min-2ec-an} and
{\sc Min-signed-an}
are in $\Pi_2$.
\end{thm}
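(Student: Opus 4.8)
The plan is to show that each of the three problems can be written as a $\forall\exists$ predicate whose matrix is decidable in polynomial time, which is exactly what it means to lie in $\Pi_2$ (the second level of the polynomial hierarchy, $\mathrm{coNP}^{\mathrm{NP}}$). Recall that a language $L$ is in $\Pi_2$ iff there are a polynomial $p$ and a polynomial-time relation $R$ with
$$ x\in L \iff \forall y,\ |y|\le p(|x|),\ \exists z,\ |z|\le p(|x|),\ R(x,y,z). $$
In all three cases the universal quantifier will range over the signatures (resp.\ representatives) of the input graph, and the existential quantifier will produce a complete coloring, equivalently a surjective homomorphism onto a clique, witnessing a lower bound on the relevant achromatic number.

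The first step I would carry out is to establish that the two ``existential'' predicates
$$\psi_2(G,C)\ge k \quad\text{and}\quad \psi_s[G,\Sigma]\ge k$$
both belong to NP. For the first, a certificate is a complete coloring of $(G,C)$ using at least $k$ colors; since $\psi_2$ is defined as the maximum number of colors over all complete colorings, such a coloring exists iff $\psi_2(G,C)\ge k$, and it has size polynomial in $|V(G)|$. Checking that a guessed map $\alpha\colon V(G)\to\setk{q}$ with $q\ge k$ is a proper \tec{} coloring and that every pair of colors is in conflict is polynomial: for each pair $i,j$ one searches for an $i$-vertex and a $j$-vertex that share an edge or are the end vertices of a common $UP_3$, both testable in polynomial time; equivalently, one guesses the quotient \tec{} graph and verifies that it is a \tec{} clique via Observation~\ref{obs:2ecclique}. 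For the second predicate, a certificate consists of a re-signing of $[G,\Sigma]$ (a set $S\subseteq V(G)$), a \tec{} graph $(K,\Pi)$ of order at least $k$, and a surjective homomorphism $\varphi$ of the corresponding representative onto $(K,\Pi)$; one checks that $[K,\Pi]$ is a signed clique by testing, for each pair of its vertices, whether they share an edge or are antipodal vertices of a $UC_4$ (polynomial by the theorem of Naserasr, Rollov\'{a} and Sopena), and that $\varphi$ is a valid surjective homomorphism. Hence $\psi_s[G,\Sigma]\ge k$ is in NP as well.

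Granting these two facts, membership in $\Pi_2$ follows for each problem by prefixing the appropriate universal quantifier. For {\sc Min-2ec-an}, $\psi_{\min}(G)\ge k$ holds iff $\psi_2(G,C)\ge k$ for every $C\subseteq E(G)$; the universally quantified object $C$ has polynomial size and the inner predicate is in NP, so the problem has the form $\forall\exists$ and lies in $\Pi_2$. For {\sc Signed-min-an}, the representatives of $[G,\Sigma]$ are exactly the \tec{} graphs $(G,\Sigma\,\triangle\,\partial(S))$ for $S\subseteq V(G)$, where $\partial(S)$ is the edge cut at $S$; as $S\mapsto\Sigma\,\triangle\,\partial(S)$ is polynomial-time computable, $\psi_{\min}[G,\Sigma]\ge k$ reads $\forall S\ \psi_2\bigl(G,\Sigma\,\triangle\,\partial(S)\bigr)\ge k$, again of the form $\forall\exists$. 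For {\sc Min-signed-an}, $\psi_{\min}^{signed}(G)\ge k$ holds iff $\psi_s[G,\Sigma]\ge k$ for every $\Sigma\subseteq E(G)$, so the universal object is a signature and the inner predicate is the second NP predicate above.

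The only genuinely nontrivial point, and the step I would be most careful about, is the polynomial-time \emph{recognition} of \tec{} cliques and signed cliques needed to verify the existential certificates: this is precisely where Observation~\ref{obs:2ecclique} and the characterization of signed cliques by Naserasr, Rollov\'{a} and Sopena do the work, reducing clique-recognition to checking, for each non-adjacent pair of vertices, the presence of a short witnessing structure (a $UP_3$ or an antipodal $UC_4$). The remaining bookkeeping, namely that all guessed objects (signatures, re-signings, colorings, quotient graphs and homomorphisms) have size polynomial in $|V(G)|$ and are verifiable in polynomial time, is routine.
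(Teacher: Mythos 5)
Your proposal is correct and is exactly the intended argument: the paper states this theorem without proof (``it is easy to show that''), and the natural filling-in is precisely your $\forall\exists$ formulation, where the universal quantifier ranges over signatures (equivalently, re-signing sets $S\subseteq V(G)$ for representatives), and the inner existential predicates $\psi_2(G,C)\ge k$ and $\psi_s[G,\Sigma]\ge k$ are the NP-membership facts the paper itself establishes in the proofs of Theorem~\ref{np-complete-th2} and of the NP membership of {\sc Signed-an}. Your care about polynomial-time clique recognition via Observation~\ref{obs:2ecclique} and the Naserasr--Rollov\'a--Sopena characterization matches the verification steps the paper relies on, so nothing is missing.
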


A natural question is thus the following.

\begin{ques}
  Are the three problems of Theorem~\ref{np-complete-th5} $\Pi_2$-Complete?
\end{ques}

\begin{table}
\begin{center}
\begin{tabular}{|l|l|l|l|}
  \hline
   & Ordinary graphs & \tec{} graphs & Signed graphs \\
  \hline
  $\psi$ & NP-complete [Th~\ref{np-complete-th1}]& N.A. & N.A.\\
  \hline
  $\psi_2$ & N.A.& NP-complete [Th~\ref{np-complete-th2}] & N.A.\\
  \hline
  $\psi_s$ & N.A. & N.A. & NP-complete [Th~\ref{np-complete-psis}]\\
  \hline
  $\psi_{\max}$ & NP-complete [Th~\ref{np-complete-th3}] & N.A. & NP-complete [Th~\ref{np-complete-th3}]\\
  \hline
  $\psi_{\min}$ & $\Pi_2$ (complete ?)[Th~\ref{np-complete-th7}]  & N.A. & $\Pi_2$ (complete ?) [Th~\ref{np-complete-th5}]\\
  \hline
  $\psi_{\max}^{signed}$ & NP-complete [Th~\ref{np-complete-th3}] &  N.A. &  N.A.\\
  \hline
  $\psi_{\min}^{signed}$ & $\Pi_2$ (complete ?)  [Th~\ref{np-complete-th9}] &  N.A. &  N.A.\\
  \hline
\end{tabular}
\caption{Decision problems related to achromatic numbers.}
\label{tab:summary}
\end{center}
\end{table}

Table~\ref{tab:summary} summarizes our results and what is known on
decision problems related to achromatic numbers.
%

\section{Proof of Theorems \ref{np-complete-psis} and~\ref{np-complete-th3}}
\label{section-NP}

In order to prove that all these problems are NP-complete, we need to prove that they are in NP and are NP-hard.
We first prove that the four problems belong to NP.

\begin{proof}[Proof of membership to NP]
Suppose that we have an instance of {\sc Signed-an} (\resp {\sc Signed-max-an}) consisting of a signed graph $[G,\Sigma]$ and an integer $k$.
Assume we are given a \tec{} graph $(G,C)$, a coloring
$\alpha$ of $(G,C)$.
We can verify that $\alpha$ is a complete coloring of $(G,C)$ (\resp of $[G,\Sigma]$ by choosing $(G,C)$ as the representative) using at least $k$ colors
and, that $(G,C) \in [G,\Sigma]$ in polynomial time as shown in \cite{harary1953}.
Therefore, both problems {\sc Signed-an} and {\sc Signed-max-an} are in NP.

Suppose now that we have an instance of {\sc Max-2ec-an} (\resp {\sc Max-signed-an}) consisting of an ordinary graph $G$ and an integer $k$.
Moreover, if we are given a signature $C$ and a vertex coloring
$\alpha$ of $G$, we can verify in polynomial time that $\alpha$ is a complete coloring of $(G,C)$ (\resp of $[G,\Sigma_C]$, the signed graph defined by $\Sigma_C := C$)  using at least $k$ colors, which implies that
both problems {\sc Max-2ec-an} and {\sc Max-signed-an} are in NP.
%
%
%
%
\end{proof}

We are now ready to prove Theorem~\ref{np-complete-psis}.

\begin{proof}[Proof of Theorem \ref{np-complete-psis}]
We already showed that the problem is in NP.
Take now an instance of {\sc Achromatic number} consisting of a connected graph $G$ and an integer $k$. 
Let $H$ be the graph obtained from $G$ by adding a vertex $z$ such that, for all $u \in V(G)$, $zu \in E(H)$. If $G$ is an interval graph and a co-graph then so is $H$ by construction. Indeed the interval corresponding to $z$ can be chosen as the convex union of the intervals of the other vertices and there is no induced $P_4$ containing $z$.  If $G$ is a complement of a bipartite graph than so is $H$, we just add an isolated vertex to the complement of $G$, this graph is still bipartite and its complement is $H$. In both cases, $H$ is in the relevant subclass.
We claim that $\psi(G) \geq k$ if and only if $\psi_s[H,\varnothing] \geq k + 1$. 

Suppose first that $\psi(G) = p \geq k$. This means that there exists a surjective homomorphism from $G$ to $K_p$, the complete graph on $p$ vertices. 
By applying this homomorphism on the copy of $G$ in $H$, we get $[H,\varnothing] \rightarrow_s [K_{p+1},\varnothing]$. 
Hence, $\psi_s[H,\varnothing] \geq k + 1$.

Suppose now that $\psi_s[H,\varnothing] \geq k+1$.
There exists a signed homomorphism from $[H,\varnothing]$ to $[K,\Pi]$, a signed clique on at least $k+1$ vertices. 
We can re-sign $[K,\Pi]$ in such a way that, in its canonical representative, the vertex $z$ is only incident with 
positive edges and has not been re-signed by the homomorphism. 
We want to show that, in this signature of $[K,\Pi]$, all the edges non incident with $z$ are positive. 
Suppose to the contrary that there is a negative edge which is the image of the edge $uv$ of $H$. 
Then, exactly one of $u$ or $v$ has been re-signed by the homomorphism, but in this case the edge linking this vertex to $z$ would be negative, a contradiction. 
Thus, all the edges are positive, which gives that $K$ is a complete graph and no vertices have been re-signed. 
If we take the restriction of our homomorphism to $G$, then we get a homomorphism from $G$ to a complete graph of size at least $k$.

If the graph $G$ was in one of the two subclasses of the theorem then we get our result.
\end{proof}

In order to prove Theorem~\ref{np-complete-th3},
we will use a reduction from
the following decision problem.


\PROBLEM{3-partition}
{A set $A = \set{a_1, \dots, a_{3m}} \in \NN^{3m}$ and an integer $B$ such that $\frac{B}{4} < a_i < \frac{B}{2}$
for every $i$, $1\leq i \leq m$}
{Is there a partition $\set{P_1, \dots P_m}$ of $A$  such that $\abs{P_i} = 3$ and $\sum\limits_{a_j \in P_i} a_j = B$
for every $i$, $1\leq i \leq m$?}


This problem has been shown to be strongly NP-complete in~\cite{GareyJohnson1990} by Garey and Johnson.
Note that we can multiply each $a_i$ and $B$ by $m+1$, and thus assume $m < a_i$ for all $i$, $1\leq i\leq 3m$.
Also note that in a positive instance of {\sc 3-partition}, we have
\begin{equation}\label{eq:mB}
\sum\limits_{1 \leq i \leq 3m} a_i = mB.
\end{equation} 
We will assume that equation (\ref{eq:mB}) holds in the rest of this section.
Moreover, since {\sc 3-partition} is strongly NP-complete, the size of the instance can be taken as $O(Bm)$.

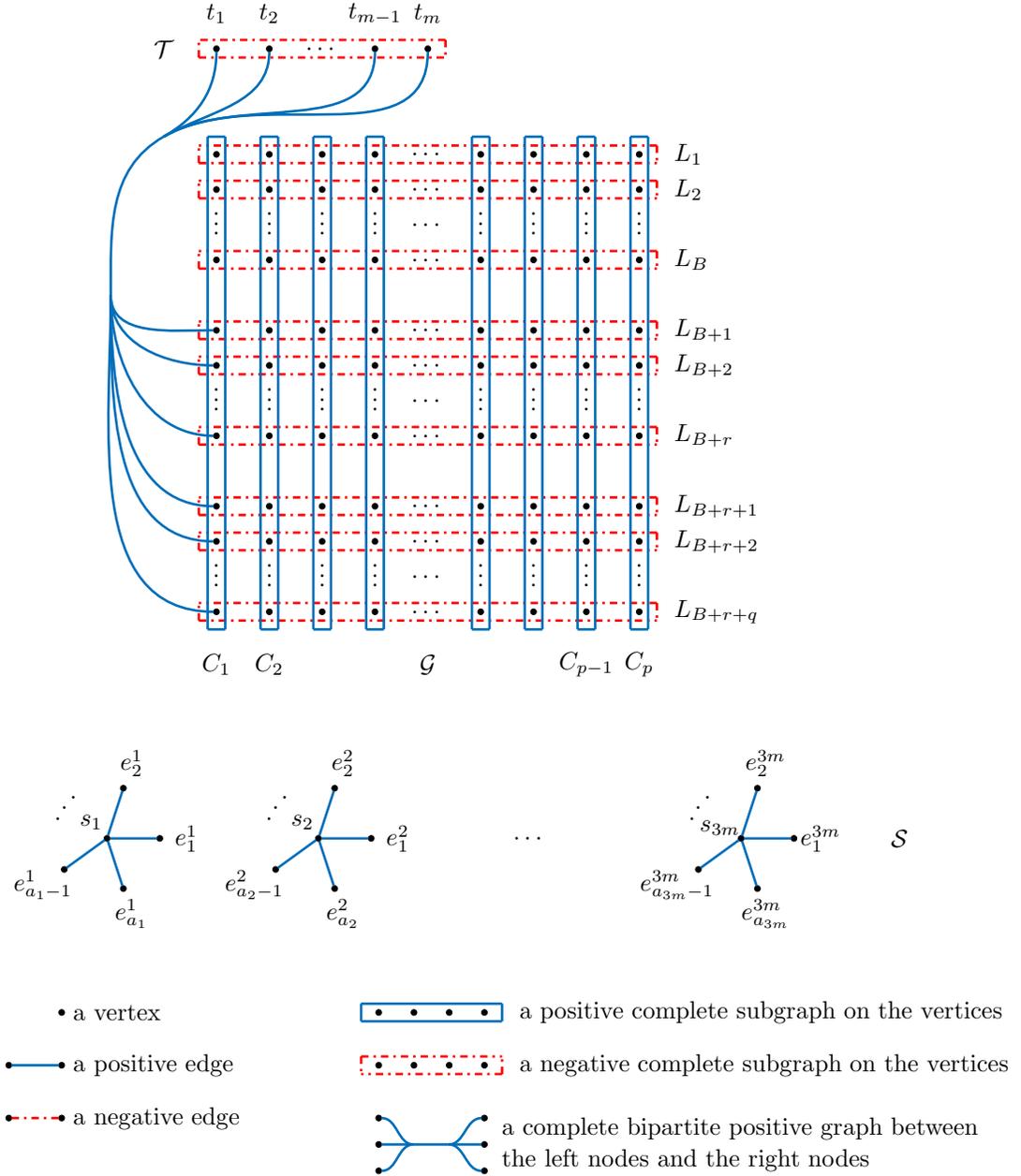
\begin{figure}
\centering

\begin{center}
\subfloat{
\centering
\begin{tikzpicture}[scale=1.5]
\tikzstylemacro{}

\foreach \i in {0,1,2,3,5,6,7,8}{
  \foreach \j in {0}{
    \node[n] (q\i\j) at (\i/2,\j/3)  {};}
  \node (q\i2) at (\i/2,1/3 + 1/12)  {$\vdots$};
  \foreach \j in {2,3}{
    \node[n] (q\i\j) at (\i/2,\j/3)  {};}

  \foreach \j in {0}{
    \node[n] (r\i\j) at (\i/2,1+2/3+\j/3)  {};}
  \node (r\i2) at (\i/2,1+3/3 + 1/12)  {$\vdots$};
  \foreach \j in {2,3}{
    \node[n] (r\i\j) at (\i/2,1+2/3+\j/3)  {};}

  \foreach \j in {0}{
    \node[n] (b\i\j) at (\i/2,3+1/3+\j/3)  {};}
  \node (b\i2) at (\i/2,3+2/3 + 1/12)  {$\vdots$};
  \foreach \j in {2,3}{
    \node[n] (b\i\j) at (\i/2,3+1/3+\j/3)  {};}
}

\foreach \j in {0,...,3}{
  \node (q7\j) at (4/2,\j/3)  {$\dots$};}
\foreach \j in {0,...,3}{
  \node (r7\j) at (4/2,1+2/3 + \j/3)  {$\dots$};}
\foreach \j in {0,...,3}{
  \node (b7\j) at (4/2,3+1/3 + \j/3)  {$\dots$};}

\foreach \j in {0,1,3,4}{
  \node[n] (t\j) at (\j/2,5 + 1/3)  {};}

\node (t2) at (2/2,5 + 1/3)  {$\dots$};

\foreach \j in {0,2,3,5,7,8,10,12,13}{
  \coordinate (cn\j1) at (-1/6,\j/3-1/12);
  \coordinate (cn\j2) at (-1/6,\j/3+1/12);
  \coordinate (cn\j3) at (4+1/6,\j/3-1/12);
  \coordinate (cn\j4) at (4+1/6,\j/3+1/12);
  \draw (cn\j1) edge[r] (cn\j2) edge[r] (cn\j3);
  \draw (cn\j4) edge[r] (cn\j2) edge[r] (cn\j3);
}

\foreach \i in {0,1,2,3,5,6,7,8}{
  \coordinate (cp\i1) at (\i/2-1/12,0-1/6);
  \coordinate (cp\i2) at (\i/2+1/12,0-1/6);
  \coordinate (cp\i3) at (\i/2 - 1/12,4+3/6);
  \coordinate (cp\i4) at (\i/2 + 1/12,4+3/6);
  \draw (cp\i1) edge[g] (cp\i2) edge[g] (cp\i3);
  \draw (cp\i4) edge[g] (cp\i2) edge[g] (cp\i3);
}

\coordinate (tr1) at (-1/6,5+1/3-1/12);
\coordinate (tr2) at (-1/6,5+1/3+1/12);
\coordinate (tr3) at (4/2+1/6,5+1/3-1/12);
\coordinate (tr4) at (4/2+1/6,5+1/3+1/12);
\draw (tr1) edge[r] (tr2) edge[r] (tr3);
\draw (tr4) edge[r] (tr2) edge[r] (tr3);

\coordinate (i1) at (-1,3);
\foreach \j in {0,2,3}{
  \draw (i1) edge[g,in=180, out=270] (q0\j);
  \draw (i1) edge[g,in=180, out=270] (r0\j);
}

\coordinate (i2) at (-0.5,4.5);
\foreach \j in {0,1,3,4}{
  \draw (i2) edge[g,in=270, out=30] (t\j);
}

\draw (i2) edge[g,in=90, out=210] (i1);

\node () at (-0.5,5+1/3) {$\mathcal{T}$};
\node () at (4/2,-0.5) {$\mathcal{G}$};

\node () at (0,5+4/6) {$t_1$};
\node () at (1/2,5+4/6) {$t_2$};
\node () at (3/2,5+4/6) {$t_{m-1}$};
\node () at (2,5+4/6) {$t_m$};

\node () at (0,-0.5) {$C_1$};
\node () at (0.5,-0.5) {$C_2$};
\node () at (3.5,-0.5) {$C_{p-1}$};
\node () at (4,-0.5) {$C_p$};

\node[anchor=west] () at (4.25,4+1/3) {$L_1$};
\node[anchor=west] () at (4.25,4) {$L_2$};
\node[anchor=west] () at (4.25,3+1/3) {$L_B$};

\node[anchor=west] () at (4.25,8/3) {$L_{B+1}$};
\node[anchor=west] () at (4.25,2+1/3) {$L_{B+2}$};
\node[anchor=west] () at (4.25,5/3) {$L_{B+r}$};

\node[anchor=west] () at (4.25,3/3) {$L_{B+r+1}$};
\node[anchor=west] () at (4.25,2/3) {$L_{B+r+2}$};
\node[anchor=west] () at (4.25,0) {$L_{B+r+q}$};

\end{tikzpicture}
}
\end{center}

\begin{center}
\subfloat{
\centering
\begin{tikzpicture}[scale=1.5]
\tikzstylemacro{}

\foreach \j in {0,1}{
  \node[n](s\j) at (2*\j,-2) {};
  \foreach \i in {0,1,3,4}{
    \node[n](s\j\i) at ($(2*\j,-2)+(360*\i/5:0.5cm)$) {};
    \draw (s\j) edge[g] (s\j\i);
  }
  \node (s\j1d) at ($(2*\j,-2)+(360*2.2/5:0.5cm)$) {$.$};
  \node (s\j2d) at ($(2*\j,-2)+(360*2/5:0.5cm)$) {$.$};
  \node (s\j3d) at ($(2*\j,-2)+(360*1.8/5:0.5cm)$) {$.$};
}
\foreach \j in {4}{
  \node[n](s\j) at (2*\j-2,-2) {};
  \foreach \i in {0,1,3,4}{
    \node[n](s\j\i) at ($(2*\j-2,-2)+(360*\i/5:0.5cm)$) {};
    \draw (s\j) edge[g] (s\j\i);
  }
  \node (s\j1d) at ($(2*\j-2,-2)+(360*2.2/5:0.5cm)$) {$.$};
  \node (s\j2d) at ($(2*\j-2,-2)+(360*2/5:0.5cm)$) {$.$};
  \node (s\j3d) at ($(2*\j-2,-2)+(360*1.8/5:0.5cm)$) {$.$};
}
\node (s2) at (4,-2) {$\dots$};

\node () at ($(2*0,-2) + (-0.15,0.15)$) {$s_1$};
\node () at ($(2*1,-2) + (-0.15,0.15)$) {$s_2$};
\node () at ($(2*3,-2) + (-0.2,0.1)$) {$s_{3m}$};

\foreach \j in {1,2}{
  \node () at ($(2*\j-2,-2)+(360*0/5:0.75cm)$) {$e^{\j}_1$};
  \node () at ($(2*\j-2,-2)+(360*1/5:0.75cm)$) {$e^{\j}_2$};
  \node () at ($(2*\j-2,-2)+(360*3/5:0.75cm)$) {$e^{\j}_{a_\j -1}$};
  \node () at ($(2*\j-2,-2)+(360*4/5:0.75cm)$) {$e^{\j}_{a_\j}$};
}

\foreach \j in {4}{
  \node () at ($(2*\j-2,-2)+(360*0/5:0.75cm)$) {$e^{3m}_1$};
  \node () at ($(2*\j-2,-2)+(360*1/5:0.75cm)$) {$e^{3m}_2$};
  \node () at ($(2*\j-2,-2)+(360*3/5:0.75cm)$) {$e^{3m}_{a_{3m} -1}$};
  \node () at ($(2*\j-2,-2)+(360*4/5:0.75cm)$) {$e^{3m}_{a_{3m}}$};
}

\node () at (7.5,-2) {$\mathcal{S}$};

\end{tikzpicture}
}
\end{center}

\begin{center}
\subfloat{
\begin{tikzpicture}[scale=1.5]
\tikzstylemacro{}

\node[n,label=right:a vertex] () at (0,0) {};
\node[n] (ep1) at (-0.5,-0.5) {};
\node[n, label=right: a positive edge] (ep2) at (0,-0.5) {};
\draw (ep1) edge[g] (ep2);
\node[n] (en1) at (-0.5,-1) {};
\node[n, label=right: a negative edge] (en2) at (0,-1) {};
\draw (en1) edge[r] (en2);

\foreach \j in {0,1,2,3}{
  \node[n] (cg\j) at (3+\j/3,0) {};
}
\coordinate(g1) at (3-1/6,1/12) {};
\coordinate(g2) at (3-1/6,-1/12) {};
\coordinate(g3) at (4+1/6,1/12) {};
\coordinate(g4) at (4+1/6,-1/12) {};
\draw (g1) edge[g] (g2) edge[g] (g3);
\draw (g4) edge[g] (g2) edge[g] (g3);
\node[label=right:a positive complete subgraph on the vertices] () at (4+1/6,0) {};

\foreach \j in {0,1,2,3}{
  \node[n] (cg\j) at (3+\j/3,-0.5) {};
}
\coordinate(g1) at (3-1/6,1/12-0.5) {};
\coordinate(g2) at (3-1/6,-1/12-0.5) {};
\coordinate(g3) at (4+1/6,1/12-0.5) {};
\coordinate(g4) at (4+1/6,-1/12-0.5) {};
\draw (g1) edge[r] (g2) edge[r] (g3);
\draw (g4) edge[r] (g2) edge[r] (g3);
\node[label=right:a negative complete subgraph on the vertices] () at (4+1/6,-0.5) {};

\node[n] (t11) at (3,-1) {};
\node[n] (t12) at (3,-1.25) {};
\node[n] (t13) at (3,-1.5) {};

\node[n] (t21) at (4,-1) {};
\node[n] (t22) at (4,-1.25) {};
\node[n] (t23) at (4,-1.5) {};

\coordinate (i1) at (3.33,-1.25) {};
\coordinate (i2) at (3.66,-1.25) {};
\foreach \j in {1,2,3}{
  \draw (i1) edge[g,in=0,out=180] (t1\j);
  \draw (i2) edge[g,in=180,out=0] (t2\j);
}
\draw (i1) edge[g] (i2);
\node[label=right:a complete bipartite positive graph between] () at (4,-1.1) {};
\node[label=right:the left nodes and the right nodes] () at (4,-1.4) {};

\end{tikzpicture}
}
\end{center}
\caption{The signed graph $[H(\mathcal{I}),\Sigma(\mathcal{I})]$ and the legend for notation.}
\label{np-tec-construt}
\end{figure}

Given an instance $\mathcal{I}$ of {\sc 3-partition}, we will construct the signed graph
$[H(\mathcal{I}),\Sigma(\mathcal{I})]$ (see Figure~\ref{np-tec-construt}).
This graph is composed of three parts.
\begin{enumerate}
\item
The subgraph $\mathcal{S}$, called the ``stars'', contains $3m$ stars $S_1,\dots,S_{3m}$.
Each star $S_i$ has a center vertex, $s_i$, and $a_i$ leaves $e^i_1,\dots, e^i_{a_i}$.

\item
The subgraph $\mathcal{T}$, called the ``target'',
is a negative clique on $m$ vertices $t_1,\dots,t_m$.

\item
The subgraph $\mathcal{G}$, called the ``grid'', contains $p(B+r+q)$ vertices,
where $p= 2\binom{m}{2} +2m(B+r+q)+ q+1$, $q=6m+2Bm$ and $r = \binom{m}{2} + qm +1$.
These vertices are denoted $x_{i,j}$, for $1\leq i\leq B+q+r$ and $1\leq j\leq p$.
There is a positive edge between $x_{i,j}$ and $x_{k,\ell}$ if and only if $i =k$ and $j \neq \ell$.
There is a negative edge between $x_{i,j}$ and $x_{k,\ell}$ if and only if $j =\ell$ and $i \neq k$.
We denote the columns and rows of $\mathcal{G}$
respectively by $C_j = \set{x_{i,j}\ | \ 1\leq i \leq B+r+q}$ and $L_i = \set{x_{i,j}\ | \ 1\leq j \leq p}$.

\end{enumerate}

Finally we add a positive edge $t_{\ell}x_{i,1}$
for every $\ell$, $1\leq\ell\leq m$, and every $i$, $B+1 \leq i \leq B+r+q$.

The signature of $H(\mathcal{I})$ above described
(one of its many equivalent signatures) is the easiest one to work with.
Note that $H(\mathcal{I})$ is a diamond-free perfect graph but is not connected.
We will conduct the proof without the connectivity requirement and then explain how to modify it
in such a way that the considered graph is connected.

From now on, we let $k(\mathcal{I}) := m + p(B+r+q)$.

\begin{claim}
\label{claim:Npfirstimplication}
	If $\mathcal{I}$ is an instance of {\sc 3-PARTITION} that admits a solution then, for the canonical representative $(H(\mathcal{I}),C_{\Sigma(\mathcal{I})})$ of $[H(\mathcal{I}),\Sigma(\mathcal{I})]$, we have:
	 $$k(\mathcal{I}) \leq \psi_2(H(\mathcal{I}),C_{\Sigma(\mathcal{I})}) \leq \psi_{\max}[H(\mathcal{I}),\Sigma(\mathcal{I})] \leq \psi_{\max}(H(\mathcal{I})).$$
\end{claim}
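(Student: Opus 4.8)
\emph{The plan.} The two right-hand inequalities are immediate from the definitions, so the whole content lies in the lower bound $k(\mathcal{I}) \le \psi_2(H(\mathcal{I}),C_{\Sigma(\mathcal{I})})$. Indeed, the canonical representative $(H(\mathcal{I}),C_{\Sigma(\mathcal{I})})$ belongs to the class $[H(\mathcal{I}),\Sigma(\mathcal{I})]$, so $\psi_2(H(\mathcal{I}),C_{\Sigma(\mathcal{I})})\le\psi_{\max}[H(\mathcal{I}),\Sigma(\mathcal{I})]$ by the definition of the signed max-achromatic number; and since $\psi_{\max}(H(\mathcal{I}))$ is a maximum of $\psi_2$ over \emph{all} signatures of the underlying graph while $\psi_{\max}[H(\mathcal{I}),\Sigma(\mathcal{I})]$ only maximises over the signatures equivalent to $\Sigma(\mathcal{I})$, we get $\psi_{\max}[H(\mathcal{I}),\Sigma(\mathcal{I})]\le\psi_{\max}(H(\mathcal{I}))$. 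Thus it suffices to exhibit a single complete colouring of $(H(\mathcal{I}),C_{\Sigma(\mathcal{I})})$ using at least $k(\mathcal{I})=m+p(B+r+q)$ colours.

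\emph{Building the colouring.} First I would exploit the rigidity of the grid: any two grid vertices sharing a row or a column are adjacent, and any two others, say $x_{i,j}$ and $x_{k,\ell}$ with $i\neq k$ and $j\neq\ell$, are the end vertices of the $UP_3$ $x_{i,j}x_{i,\ell}x_{k,\ell}$; hence $\mathcal{G}$ is a \tec{} clique and every complete colouring is forced to assign its $p(B+r+q)$ vertices pairwise distinct colours. I keep these as the ``grid colours''. Since $\mathcal{T}$ is a clique, I may give the $m$ target vertices $t_1,\dots,t_m$ a further $m$ fresh colours $\tau_1,\dots,\tau_m$. This already uses $k(\mathcal{I})$ colours, and the star vertices will only re-use colours already introduced, so the count cannot drop. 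The conflicts present for free are the grid--grid pairs (the grid is a clique), the target--target pairs (the target is a clique), and every target--grid pair that a positive edge $t_\ell x_{i,1}$ places in conflict directly: through such an edge and the sign pattern of the incident row (resp.\ column), each $\tau_\ell$ is put in conflict, via a suitable $UP_3$, with all grid colours lying in the rows joined to the target.

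\emph{Where $3$-partition enters.} It remains to realise the target--grid conflicts that the target does not reach by itself, namely those between the colours $\tau_\ell$ and the grid colours of the rows \emph{not} joined to $\mathcal{T}$. Here the stars act as conflict-carriers. A star leaf and a grid vertex lie in different components, hence are identifiable, so a leaf may be placed in the colour class of any grid vertex; if moreover the centre of that star carries a target colour $\tau_\ell$, then the positive centre--leaf edge forces exactly one new conflict, between $\tau_\ell$ and that grid colour, and does so while respecting sign-consistency, since every edge created this way is positive and agrees with the positive edges $t_\ell x_{i,1}$. Given a solution $\{P_1,\dots,P_m\}$ of the instance, I would attach the three stars of $P_\ell$ to the target colour $\tau_\ell$, colour their centres $\tau_\ell$, and distribute their $\sum_{a_j\in P_\ell}a_j=B$ leaves over the grid colours that $\tau_\ell$ still has to meet. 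The equalities $\abs{P_\ell}=3$ and $\sum_{a_j\in P_\ell}a_j=B$, together with the global budget $\sum_i a_i=mB$ coming from~(\ref{eq:mB}), are precisely what makes the number of available leaves match the number of missing conflicts, so that each $\tau_\ell$ can be put in conflict with all the grid colours it was still missing.

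\emph{Main obstacle.} The delicate part is the final verification that this is a genuine complete \tec{} colouring: that adjacent vertices always receive different colours, that each pair of colours carries a single sign, and, above all, that after the free conflicts are accounted for the conflicts still to be created are exactly those the stars can deliver under the budget $\sum_i a_i=mB$. This is the step where the padding parameters $q$, $r$ and $p$ have been tuned, and matching the counting to them is the crux. One must also check that the whole construction works for the \emph{fixed} canonical signature, with no re-signing smuggled in, so that the bound genuinely concerns $\psi_2(H(\mathcal{I}),C_{\Sigma(\mathcal{I})})$ and the chain of inequalities is valid.
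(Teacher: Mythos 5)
Your reduction of the claim to exhibiting one complete colouring of the canonical representative, and the construction itself (grid vertices get $p(B+r+q)$ distinct colours since $\mathcal{G}$ is a \tec{} clique, targets get $m$ fresh colours, star centres of the triple $P_\ell$ are coloured $\tau_\ell$, leaves are placed on grid colours), is exactly the paper's proof recast in colouring language: the paper identifies $s_j$ with $t_i$ for $j\in P_i$ and then identifies the resulting $B$ leaves of each $t_i$ with the vertices $x_{1,1},\dots,x_{B,1}$. However, your accounting at the crux is wrong, and you yourself defer the verification (``Main obstacle \dots is the crux''), so the gap is genuine. You assert that each leaf ``forces exactly one new conflict'' and that the budget $\sum_{a_j\in P_\ell}a_j=B$ makes ``the number of available leaves match the number of missing conflicts''. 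Count again: the rows not joined to $\mathcal{T}$ are $L_1,\dots,L_B$, so each $\tau_\ell$ is missing conflicts with all $Bp$ colours of those rows, while it only owns $B$ leaves --- a mismatch by a factor of $p$. Distributing the $B$ leaves ``over the grid colours that $\tau_\ell$ still has to meet'' therefore cannot work one conflict at a time.

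The correct mechanism, which is the one the paper uses (and which you do invoke, but only for the rows already joined to $\mathcal{T}$), is propagation along a row: place, for each target $t_\ell$, its $B$ leaves \emph{bijectively} on the colour classes of $x_{1,1},\dots,x_{B,1}$, one leaf per unreached row. In the quotient this creates a positive edge between $\tau_\ell$ and the colour of $x_{j,1}$, and then for every $k\neq 1$ the path through $x_{j,1}$ is a $UP_3$ (positive centre--leaf edge followed by a negative row edge $x_{j,1}x_{j,k}$), putting $\tau_\ell$ in conflict with the \emph{entire} row $L_j$ at once. The equality $\sum_{a_j\in P_\ell}a_j=B$ is thus what matches the number of leaves per target to the number of missing \emph{rows} (not missing colour pairs), which is why a bijection onto $\set{x_{1,1},\dots,x_{B,1}}$ exists. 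Once the placement is fixed this way, the remaining checks you flagged (properness, one sign per colour pair, no loops or digons) are immediate, since leaves are only adjacent to their centres and all edges between a class $\tau_\ell$ and a class of column~$1$ are positive; the paper disposes of this in one line because star edges are identified only with non-edges of $\mathcal{T}\cup\mathcal{G}$.
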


\begin{proof}
If there is a solution $\set{P_1,\dots,P_m}$ to $\mathcal{I}$ then,
on the \tec{} graph $(H(\mathcal{I}), C_{\Sigma(\mathcal{I})})$, we
assign one color to each vertex of $\mathcal{T} \cup \mathcal{G}$ (which gives $k(\mathcal{I})$ colors), then we 
identify $t_i$ and $s_j$ for $j\in P_i$ and every $i$, $1\leq i\leq m$. 
We thus get that each $t_i$ has $B$ positive neighbours of degree $1$ which correspond to the $B$ leaves of the three stars $S_j$ for $j \in P_i$.
For $1 \leq i \leq m$, we identify each of these $B$ neighbours of $t_i$
with a unique vertex in $\set{x_{1,1}, \dots, x_{B,1}}$.

We obtain a \tec{} clique on $k(\mathcal{I})$ colors. 
Indeed, if $u$ and $v$ are two vertices of the graph after the identification, we have three cases to consider. 
If $u$ and $v$ are both vertices of the target $\mathcal{T}$, then there is an edge $uv$ in the graph by construction of $\mathcal{T}$.
If $u$ and $v$ are both vertices of the grid $\mathcal{G}$, say $u = x_{i,j}$ and $v = x_{k,\ell}$, then $ux_{k,j}v$ is a $UP_3$ by construction of $\mathcal{G}$.
Otherwise, suppose that $u$ is a vertex of the target and $v$ is a vertex of the grid, say $u = t_i$ and $v= x_{j,k}$. 
If $j > B$ then, by construction of $[H(\mathcal{I}),\Sigma(\mathcal{I})]$, $ux_{j,1}v$ is a $UP_3$ (or $uv$ is an edge if $k=1$). In the other case, there is a positive edge $ux_{j,1}$ by the previous identifications and a negative edge $x_{j,1}x_{j,k}$ (if $k\neq1$) by construction of $\mathcal{G}$.
Hence, we cannot identify $u$ and $v$, which implies that the graph is a clique on $k(\mathcal{I})$ vertices.

We also need to prove that the homomorphism is well defined, {\ie} does not create any loop and does not create any digon. 
This follows from the fact that we identify edges of $\mathcal{S}$ with non-edges of $\mathcal{T} \cup \mathcal{G}$. 
\end{proof}

We want to prove that if $\psi_\max(H(\mathcal{I}),\Sigma(\mathcal{I})) \geq k(\mathcal{I})$ then $\mathcal{I}$ has a solution. There are two main ideas in the following result. 

First, we know that if $\psi_\max(H(\mathcal{I}),\Sigma(\mathcal{I})) \geq k(\mathcal{I})$, then the homomorphism that reaches the signed max-achromatic number creates a clique on more than $k(\mathcal{I})$ vertices. 
This clique has diameter~$2$, as indicated in Observation \ref{obs:2ecclique}. 
We want to show that constructing a ``large'' graph of diameter~$2$ from $H(\mathcal{I})$ implies that $\mathcal{I}$ has a solution.

Secondly, in this setting, this means that the identification performed to create the ``large'' graph of diameter~$2$, is similar to the one we did in the proof of Claim~\ref{claim:Npfirstimplication}.

\begin{lem}
  \label{np-lemma-tec}
Let $\mathcal{I}$ be an instance of {\sc 3-partition}.
If there is a surjective homomorphism $\varphi$ from $H(\mathcal{I})$ to an ordinary graph $K$ of order greater than $k(\mathcal{I})$ and diameter at most $2$, then $\mathcal{I}$ has a solution. 
\end{lem}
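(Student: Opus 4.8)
The plan is to read a valid $3$-partition directly off the fibers of $\varphi$. Write $V(H(\mathcal{I})) = \mathcal{S} \cup \mathcal{T} \cup \mathcal{G}$ and recall that $|\mathcal{T}\cup\mathcal{G}| = m + p(B+r+q) = k(\mathcal{I})$, so $|V(H(\mathcal{I}))| = k(\mathcal{I}) + |\mathcal{S}|$ with $|\mathcal{S}| = 3m + mB$. Since $\varphi$ is surjective onto a graph of order at least $k(\mathcal{I})$, the image loses at most $|\mathcal{S}|$ vertices to identifications, so the star vertices form the entire ``merging budget''. I would carry out the proof in two phases: first show that this budget, together with the diameter-$2$ requirement, forces $\varphi$ to be injective on $\mathcal{T}\cup\mathcal{G}$ and to absorb every star vertex into a fiber containing exactly one target/grid vertex; then show that the same diameter-$2$ requirement forces the induced assignment of star centers to targets to be a solution of $\mathcal{I}$.

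For the first phase I would argue that no fiber consists only of star vertices. Recall that $\mathcal{S}$ is a disjoint union of stars, each a separate component of $H(\mathcal{I})$ whose only edges are center--leaf edges; hence a purely-star fiber $a$ has all its $K$-neighbours among images of star vertices, and in particular has no neighbour among the (abundant) fibers that meet only $\mathcal{T}\cup\mathcal{G}$. Diameter $2$ would then require, for every such target/grid fiber $b$, a common neighbour meeting both $\mathcal{S}$ and $\mathcal{T}\cup\mathcal{G}$, and the target/grid parts of these common neighbours would have to \emph{dominate the whole grid}. As a single grid vertex dominates only its row and its column, and a center has fewer than $B/2$ leaves, the number of star vertices available is far too small to realise such a domination once $p,q,r$ are as large as chosen. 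This contradiction shows the image equals $\varphi(\mathcal{T}\cup\mathcal{G})$, which has order at most $k(\mathcal{I})$; so the order equals $k(\mathcal{I})$, the map $\varphi$ is injective on $\mathcal{T}\cup\mathcal{G}$, and every fiber contains exactly one target/grid vertex. This yields an assignment $\tau$ sending each star vertex to the unique target/grid vertex of its fiber, with adjacent star vertices sent to distinct, $K$-adjacent vertices.

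For the second phase, set $P_\ell = \set{\, i : \tau(s_i) = t_\ell \,}$. The key observation is that in $K$ every target $t_\ell$ must lie within distance $2$ of each grid vertex $x_{a,j}$ with $1 \le a \le B$ (the ``top'' rows, which are \emph{not} attached to $\mathcal{T}$ in $H(\mathcal{I})$). Since the only original neighbours of $t_\ell$ outside $\mathcal{T}$ lie in column $1$ and in rows $B+1,\dots,B+r+q$, none of them is adjacent to a top-row vertex, so the required common neighbour must come from an edge \emph{created} by the merging, i.e.\ from a leaf of a center in $P_\ell$ landing in some grid cell. A created neighbour lying in a cell of row $a$ (resp.\ column $j$) reaches at once every top cell of that row (resp.\ of that column); covering the whole $B\times p$ block of top cells this way amounts to covering all its cells by whole rows and columns, which forces either all $B$ top rows or all $p$ columns to be hit, hence requires at least $\min(B,p)=B$ created neighbours of $t_\ell$. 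Therefore $\sum_{i \in P_\ell} a_i \ge B$ for every $\ell$. Summing over $\ell$ and using $\sum_i a_i = mB$ (equation~(\ref{eq:mB})) forces equality throughout: every center maps to a target and $\sum_{i \in P_\ell} a_i = B$ for each $\ell$. Finally $\tfrac{B}{4} < a_i < \tfrac{B}{2}$ forces $|P_\ell| = 3$, so $\{P_1,\dots,P_m\}$ is a solution of $\mathcal{I}$.

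The main obstacle is the first phase, together with the precise diameter-$2$ bookkeeping of the second: one must rule out every alternative recompression of the grid or reattachment of the stars that might keep the image both of diameter $2$ and of order $k(\mathcal{I})$. This is exactly what the particular values of $p$, $q$ and $r$ are designed to guarantee, by making the number of adjacencies that \emph{must} be realised (domination of the grid in phase one, coverage of the $B$ top rows of each of the $m$ targets in phase two) strictly exceed the total number $mB$ of leaves and $3m$ of centers available. I expect the bulk of the work to be a careful accounting of these forced adjacencies, and in particular of how the (original) target--bottom-grid attachment differs from the target--top-grid attachment that the stars are obliged to create.
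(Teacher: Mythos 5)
Your phase 1 is essentially sound, and it is a geometric reformulation of the paper's first step: the paper defines $\alpha=\varphi(\mathcal{S})\setminus\varphi(\mathcal{G}\cup\mathcal{T})$ and shows $\alpha=\varnothing$ by an edge count calibrated by the choice of $r$, while you kill purely-star fibers by a row/column domination count; both routes work and yield that every fiber contains exactly one vertex of $\mathcal{T}\cup\mathcal{G}$. The genuine gap is in phase 2, at the sentence ``none of them is adjacent to a top-row vertex, so the required common neighbour must come from an edge created by the merging, i.e.\ from a leaf of a center in $P_\ell$.'' First, the claim is literally false in column $1$: $x_{i,1}$ with $i>B$ is an \emph{original} neighbour of $t_\ell$ and is adjacent to every top cell $x_{a,1}$, $a\le B$. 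More seriously, the common neighbour witnessing $\mathrm{dist}(\varphi(t_\ell),\varphi(x_{a,j}))\le 2$ need not be a created neighbour of $t_\ell$'s fiber at all: it can be another target fiber $\varphi(t_{\ell'})$ --- adjacent to $\varphi(t_\ell)$ through the clique $\mathcal{T}$ --- which gained adjacency to the cell $(a,j)$ because some star edge has one endpoint in $t_{\ell'}$'s fiber and the other in $x_{a,j}$'s fiber; or a grid fiber that swallowed a star center (e.g.\ a center merged onto $x_{B+1,1}$, adjacent to all targets, with a leaf merged onto $x_{a,j}$). One created edge of either kind serves all $m$ targets at once for that single cell, so your per-target conclusion ``at least $B$ created neighbours of $t_\ell$, hence $\sum_{i\in P_\ell}a_i\ge B$'' does not follow from the covering-by-rows-and-columns argument as stated: that argument alone does not exclude, say, a map sending all centers to $t_1$ and covering the other targets' missing rows cell by cell. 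A related slip: a created neighbour of $t_\ell$'s fiber can also arise from a \emph{leaf} lying in that fiber with its center in a grid fiber, so even granted $B$ created neighbours you cannot charge them directly to $\sum_{i\in P_\ell}a_i$.

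The gap is repairable, but the repair is precisely the bookkeeping the paper performs and your sketch asserts rather than carries out. The shared shortcuts above each patch only a single cell and each consumes one of the $mB$ star edges, so one must count globally: the demand of $m(B+r+q)$ (target, line) incidences against the exact supply of $mB$ star edges plus $m(r+q)$ target--grid edges. This is the paper's quantity $\mathcal{N}_u$, whose balance $\sum_{u\in\beta}\abs{\mathcal{N}_u}=m(B+r+q)$ forces a one-to-one correspondence between pairs and edges, from which no edge may be wasted; only then do the facts that every center lands on a target (using $a_i>m$), that each target receives exactly three centers (using $a_i<B/2$), and that $\sum_{i\in P_\ell}a_i=B$ follow. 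Equivalently, in your language, one needs the explicit comparison $p>2mB+\dots$ to show that cell-by-cell patching of even one missing row exceeds the total star-edge budget; without that count the per-target distance-$2$ analysis around each $t_\ell$ is not conclusive.
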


The proof of Lemma \ref{np-lemma-tec} works as follows. We  first prove that each vertex of $K$ has a pre-image in $\mathcal{T}$ or $\mathcal{G}$. We then prove that the edges in $\mathcal{S}$ were identified in a  way similar to the construction in Claim~\ref{claim:Npfirstimplication}.

\begin{proof}
Let $\varphi$ be a homomorphism
of  $H(\mathcal{I})$ to $K$, an ordinary graph of order greater than $k(\mathcal{I})$ and diameter at most $2$. 

Let then
$$\alpha = \varphi(\mathcal{S}) \setminus \varphi(\mathcal{G} \cup  \mathcal{T}),\
\beta = \varphi(\mathcal{T}) \setminus \varphi(\mathcal{G})\
\mbox{and}\ \gamma = \varphi(\mathcal{G}).$$
The set $\alpha$ represents the vertices of $K$ that come from the identification of vertices only in $\mathcal{S}$. 
The set $\beta$ represents the vertices of $K$ that come from the identification of vertices in $\mathcal{T}$ or $\mathcal{S}$. 
The set $\gamma$ represents all other vertices.
Our first goal is to prove that the set $\alpha$ is empty.

Note that $(\alpha,\beta,\gamma)$ is a partition of $\varphi(H(\mathcal{I}))$,
and thus 
$$\abs{\alpha} +\abs{\beta} + \abs{\gamma} \geq k(\mathcal{I}) = m + p(B+r+q).$$
Moreover, we have 
$$\abs{V(H(\mathcal{I}))} - k(\mathcal{I}) = 3m + Bm.$$

Let $d = 3m + Bm$.
Since the homomorphism $\varphi$ can do only $d$ identifications of vertices,
there are at most $2d = q$ vertices in $H(\mathcal{I})$ which have been identified with some other vertex.
We denote by $\mathrm{Id}$ the set of vertices that were identified to another and by  $\mathrm{Id}_{\mathcal{G}}$
the set of vertices of $\mathcal{G}$ that have been identified with another vertex of $\mathcal{G}$.

Let $\mathcal{L} = \set{L_i\ |\ L_i \cap \mathrm{Id}_{\mathcal{G}}= \varnothing}$.
The set $\mathcal{L}$ is the set of lines (themselves sets of vertices) of the grid $\mathcal{G}$ that do not contain a vertex identified with another vertex of the grid.
Moreover, for every vertex $u \in K \setminus \gamma$ (\ie $u$ is a vertex that is not in the image of the grid), let
$$\mathcal{N}_u = \set{L_i \in \mathcal{L}\ |\ \varphi(L_i) \cap N_K(u) \neq \varnothing}.$$
The set $\mathcal{N}_u$ is the set of lines of the grid $\mathcal{G}$ that do not contain any vertices identified with another vertex of the grid and intersect the neighbourhood of $u$ in $K$. We claim that $\abs{\mathcal{N}_u} = \abs{\mathcal{L}}$. By definition of $\mathcal{N}_u$, we have $\abs{\mathcal{N}_u} \geq \abs{\mathcal{L}}$.

Suppose to the contrary that there exists $u \in K \setminus \gamma$ such that  $\abs{\mathcal{N}_u} < \abs{\mathcal{L}}$. Therefore,
there exists $L_i \in \mathcal{L}$ with $\varphi(L_i) \cap N_K(u) = \varnothing$.
Since K has diameter at most~$2$, for every vertex $v \in \varphi(L_i)$,
there exists a neighbour $w$ of $u$ that is a neighbour of $v$ (recall that there is no edge $uv$).
There are at least $p-q$ vertices of $L_i$ belonging to some $C_j$ with $C_j \cap \mathrm{Id} = \varnothing$ with $1\leq j\leq p$ (\ie columns where we did not identified any vertices).
Among these columns,
there are at most $\abs{N_K(u)}$ $C_j$'s which contain a neighbour of $u$ in $\varphi(H(\mathcal{I}))$.
Thus, there are at least $p - q - \abs{N_K(u)}$ vertices of $L_i$ such that the column $C_j$ they belong to
intersects neither $\mathrm{Id}$ nor $\varphi^{-1}(N_K(u))$.
These vertices correspond to $p - q - \abs{N_K(u)}$ vertices in $\varphi(L_i)$ as they
do not belong to $\mathrm{Id}_{\mathcal{G}}$.

Moreover, 
$$\abs{N(u)} \leq \abs{E(H(\mathcal{I}))\setminus E(\mathcal{G})} \leq \binom{m}{2} + m(B+r+q).$$
%
%
Therefore, there are at least $p - q - \binom{m}{2} - m(B+r+q)$ vertices in $L_i$ not belonging to a
column 
whose image by $\varphi$ intersects $\mathrm{Id}$ or contains a neighbour of $u$.
It follows that every such vertex, say $x_{i,j}$, has no neighbour in $\varphi(L_i)\cup \varphi(C_j)$ which is a neighbour of $u$. 
As $L_i$ does not intersect $\mathrm{Id}_{\mathcal{G}}$ and $C_j$ does not intersect $\mathrm{Id}$, in $K$, $\varphi(L_i)\cup \varphi(C_j)$ contains all the neighbours of $x_{i,j}$ among the vertices of $\mathcal{G}$.
These remarks imply that $x_{i,j}$ is linked to $u$ by a path of length~2
whose interior vertex is in $K \setminus \gamma$ (\ie $x_{i,j}$ is linked to $u$ by a path which is not induced in the grid).
%

There are at most $\binom{m}{2} + m(B+r+q)$ edges not in $\gamma$, and $p-q-\binom{m}{2} -m(B+r+q) \geq \binom{m}{2} + m(B+r+q) +1$ vertices to which $u$ must be linked in $\varphi(L_i)$ by a path of length~2 whose interior vertex is not in $\gamma$.
This is not possible since we do not have enough edges that can be used for such paths. 
Therefore, $\abs{\mathcal{N}_u} \geq \abs{\mathcal{L}} \geq B+r+q -\abs{\mathrm{Id}_{\mathcal{G}}} \geq B+r$.

Moreover, 
$$\abs{E(\varphi(H(\mathcal{I}))\setminus \gamma))} \leq Bm + \binom{m}{2} + (r+q) m - (r+q)(m-\abs{\beta}).$$
The last term comes from the fact that the edges between vertices of $\mathcal{G}$ and
$\mathcal{T} \setminus \varphi^ {-1}(\beta)$ have images by $\varphi$ in $\varphi(\mathcal{G})$ and do not contribute
to $\abs{E(\varphi(H(\mathcal{I}))\setminus \gamma))}$.
This number of edges must be greater than
$$\sum\limits_{u\in \alpha \cup \beta} \abs{\mathcal{N}_u} \geq (B+r)(\abs{\alpha} + \abs{\beta}).$$
But $\abs{\alpha} + \abs{\beta} \geq k(\mathcal{I}) - \abs{\gamma} \geq m$ by the choice of $k(\mathcal{I})$.
We then get 
$$Bm + \binom{m}{2} + (r+q)\abs{\beta} \geq Bm + r(\abs{\alpha} + \abs{\beta}),$$ and thus,
since $\abs{\beta} \leq m$, 
$$\binom{m}{2} + qm \geq r\abs{\alpha}.$$
If $\abs{\alpha} > 0$, then $\binom{m}{2} + qm + 1= r \leq  \binom{m}{2} + qm$,
a contradiction, and thus $\alpha = \varnothing$.

Because $\alpha = \varnothing$, every vertex of $\mathcal{S}$ is identified with a vertex of $\mathcal{G} \cup \mathcal{T}$, and since there are $d$ vertices in $\mathcal{S}$, this accounts for all the identifications.
We then get $\beta = \mathcal{T}$ and $\gamma = \mathcal{G}$,
which implies $\mathrm{Id}_{\mathcal{G}}= \varnothing$ and $\abs{\mathcal{N}_u} = B+q+r$.
The number of edges that can contribute to $\sum\limits_{u\in \beta} \abs{\mathcal{N}_u}$ is limited:
$Bm$ edges in $\mathcal{S}$ and $(r+q)m$ edges between $\mathcal{T}$ and $\mathcal{G}$,
which gives $\sum\limits_{u\in\beta} \abs{\mathcal{N}_u} \leq m(B+r+q)$.
Therefore, there is a one-to-one correspondence between the pairs in $\mathcal{T} \times \mathcal{L}$ and the edges that can contribute to the sum.

Suppose now that some $s_i$ was identified with a vertex in $\mathcal{G}$.
Since $m<a_i$, and no two leaves of a star can be identified with each over, the leaves of this star cannot all be identified with the vertices of $\mathcal{T}$ as $\mathcal{T}$ is of order $m$. So at least one leaf $e^i_j$ is identified with a vertex of $\mathcal{G}$, but this means that the edge $s_ie^i_j$ does not contribute to the sum $\sum_{u \in \beta} \abs{\mathcal{N}_u}$,
a contradiction.

Now, note that for any $u \in \beta$, $\mathcal{N}_u \leq B + r+ q$. If at most two star centers are identified with some vertex $u$ of $\mathcal{T}$, then, since these two stars have less than $B$ leaves between them, we have $\mathcal{N}_u < B + r+ q$ and thus 
$\sum_{u \in \beta} \mathcal{N}_u < m(B + r+ q)$.
%
Hence, we finally get that each vertex $u\in\mathcal{T}$ was identified with three $s_i$'s whose sum
of subscripts equals $B+r+q - (q +r) = B$.
This gives us a partition of the set $A$ which is a solution of {\sc 3-partition}.
\end{proof}

We can now prove that {\sc Max-2ec-an} and {\sc Signed-max-an} are NP-complete.

\begin{proof}[Proof that {\sc Max-2ec-an} (\resp {\sc Signed-max-an}) is NP-complete]\mbox{}\\
We already proved that both these problems are in NP.
If $\mathcal{I}$ is an instance of {\sc 3-PARTITION}, then we construct the graph $H(\mathcal{I})$ (\resp the signed graph $[H(\mathcal{I}),\Sigma(\mathcal{I})]$) in polynomial time.

By Claim \ref{claim:Npfirstimplication}, if $\mathcal{I}$ has a solution, 
then $k(\mathcal{I}) \leq \psi_\max(H(\mathcal{I}))$ (\resp $k(\mathcal{I}) \leq \psi_\max[H(\mathcal{I}),\Sigma(\mathcal{I})]$). 

If $k(\mathcal{I}) \leq \psi_\max(H(\mathcal{I}))$ (\resp $k(\mathcal{I}) \leq \psi_\max[H(\mathcal{I}),\Sigma(\mathcal{I})]$), then there exists a signature $C$ and a surjective \tec{} homomorphism  $\varphi$ such that $(H(\mathcal{I}),C) \rightarrow_2 (K,D)$ by $\varphi$, where $(K,D)$ is a \tec{} clique of order greater than $k(\mathcal{I})$ and, in the signed case, $C \in \Sigma(\mathcal{I})$.
As $(K,D)$ has diameter~$2$, by Lemma \ref{np-lemma-tec}, we get that $\mathcal{I}$ has a solution since $\varphi$ is also a surjective homomorphism from $H(\mathcal{I})$ to $K$.

The problems {\sc Max-2ec-an} and {\sc Signed-max-an}  are thus NP-complete even when restricted to diamond-free perfect graphs.
To make the graph $H(\mathcal{I})$ connected, it suffices to increase $q$ by one and $r$ by $3m$,
and to add an edge joining the vertices $x_{B+r+q,1}$ and $s_i$ for every $i$, $1\leq i\leq 3m$.
The graph is now clearly connected and the same arguments as in Lemma~\ref{np-lemma-tec} works,
since  the $3m$ new edges cannot be used to create conflicts between vertices of $\mathcal{T}$ and $\mathcal{G}$
that did not already exist.
\end{proof}

We now consider the case of signed graphs.
Let $H'(\mathcal{I})$ be the  graph
obtained from $H(\mathcal{I})$
(the underlying graph of the signed graph previously defined, see Figure~\ref{np-tec-construt}) by adding a new vertex $z$ such that,
for every vertex $v \in V(H(\mathcal{I}))$, $zv$ is an edge.
We also define $k'(\mathcal{I}) = k(\mathcal{I}) +1$.

We are left to prove that {\sc Max-signed-an} is NP-complete.

\begin{proof}[Proof that {\sc Max-signed-an} is NP-complete]\mbox{}\\
We already proved that this problem is in NP.
If $\mathcal{I}$ is an instance of {\sc 3-PARTITION}, then we construct the graph $H'(\mathcal{I})$ in polynomial time. Note that $H'(\mathcal{I})$ is a connected perfect graph.

By Claim \ref{claim:Npfirstimplication}, if $\mathcal{I}$ has a solution, then
$(H(\mathcal{I}),C_{\Sigma(\mathcal{I})}) \rightarrow_2 (K,D)$, where $(K,D)$ is a \tec{} clique of order greater than $k(\mathcal{I})$ and $C_{\Sigma(\mathcal{I})} = \Sigma(\mathcal{I})$. 
Thus, $(H'(\mathcal{I}),C_{\Sigma(\mathcal{I})})  \rightarrow_2 (K',D')$, where $(K',D')$ is obtained from $(K,D)$ by adding one vertex $z$ that is a positive neighbour of every vertex of $(K,D)$. 
By Lemma \ref{2ectosignedclique}, $[K',\Sigma']$, where $\Sigma' = D'$, is a signed clique. 
Hence 
 $k'(\mathcal{I}) \leq \psi_\max^{signed}(H'(\mathcal{I}))$. 

If $k'(\mathcal{I}) \leq \psi_\max^{signed}(H'(\mathcal{I}))$, then there exists a signature $\Sigma_1$ and a surjective signed homomorphism  $\varphi'$ such that $[H'(\mathcal{I}),\Sigma_1] \rightarrow_s [K',\Pi']$ by $\varphi'$, where $[K',\Pi']$ is a signed clique of order greater than $k'(\mathcal{I})$. 
Up to re-signing $[K',\Pi']$, we can assume that $z$ is a positive neighbour of all the other vertices.
Let $K$ be the graph obtained from $K'$ by removing the image of $z$. 
Note that $z$ was not identified by $\varphi'$. 
By Lemma \ref{2ectosignedclique}, $(K,D)$ is a \tec{} clique, where $D$ is $\Sigma_1$ from which 
we removed the edges incident to $z$. 
Let $\varphi$ be the restriction of $\varphi'$ to $H(\mathcal{I})$. 
Then, by $\varphi$, $H(\mathcal{I}) \rightarrow_2 K$.
As $[K',\Pi']$ is a signed clique, $K$ has diameter $2$ and, by Lemma \ref{np-lemma-tec}, 
we get that $\mathcal{I}$ has a solution.
\end{proof}

\section{Discussion}
\label{section-discussion}

In this paper, we introduced and study achromatic numbers of \tec{} graphs and of signed graphs.
In particular, Theorems \ref{np-complete-th2}, \ref{np-complete-psis} and~\ref{th-np} state that computing the achromatic number of a \tec{} graph or of a signed graph  is NP-complete.

The two following results allow to conclude that the problem of computing the achromatic number of an ordinary graph is FPT.
Recall that a {\it reducing congruence class} (an {\it \rc class} for short) on a graph $G$ is an equivalence class
of the relation~$\equiv_G$ defined by $u \equiv_G v$ if and only if $N_G(u)=N_G(v)$,
where $N_G(u)$ denotes the neighborhood of the vertex $u$ in $G$.
In other words, $u \equiv_G v$ if and only if $u$ and $v$ are twins in $G$.

\begin{thm}[Hell and Miller\cite{Hell1976}, Hoffman~\cite{Hoffman1972} and M\'at\'e~\cite{Mate1981}]
There is a computable function $f:\NN \rightarrow\NN$ such that,
for every integer $k$, if a graph $G$ has more than $f(k)$ \rc classes then $\psi(G) \geq k$.
\label{th:Hell-Miller}
\end{thm}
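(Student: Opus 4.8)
The statement to prove is Theorem~\ref{th:Hell-Miller}: there exists a computable function $f:\NN\to\NN$ such that any graph $G$ with more than $f(k)$ reducing congruence classes satisfies $\psi(G)\ge k$. I describe the plan below.

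\medskip

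The plan is to reduce the problem to a bounded structure on the \rc classes and then apply a Ramsey-type / pigeonhole argument to extract a complete coloring on $k$ colors. First I would observe that the relation $\equiv_G$ collapses $G$ onto a quotient graph $G^\ast$ whose vertices are the \rc classes; since twins can be freely merged or split when building complete colorings (as in Proposition~\ref{achrom-induced-classe}, where identifying a vertex with its twin never decreases the achromatic number), the achromatic number of $G$ is governed by $G^\ast$ together with the multiplicities of each class. The key point is that $G^\ast$ is a graph in which no two vertices have the same neighborhood, so $G^\ast$ has no nontrivial twins; such graphs are known to have bounded order once one controls their structural complexity, but here the content is the opposite direction: having \emph{many} \rc classes forces a large complete coloring.

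\medskip

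The main steps, in order, would be as follows. First, set $N = $ (number of \rc classes) and suppose $N > f(k)$ for a function $f$ to be determined. Second, build a candidate complete $k$-coloring greedily: I would attempt to find $\binom{k}{2}$ edges, one for each pair of colors, that can be simultaneously realized as "conflict edges" after assigning colors to the classes. The difficulty is that in an ordinary graph a complete $k$-coloring requires that every pair of the $k$ color classes be joined by at least one edge. So the real task is to show that a graph with enough distinct neighborhoods contains a spanning structure rich enough to support all $\binom{k}{2}$ adjacencies among $k$ blocks. Third, I would invoke a Ramsey/pigeonhole bound: among $N$ classes with pairwise distinct neighborhoods, for $N$ sufficiently large (namely $N > f(k)$ with $f$ growing like a tower or Ramsey function in $k$) one can find $k$ classes that are pairwise "distinguishable by an edge," i.e. for each pair there is an edge witnessing a conflict, which yields a complete coloring on those $k$ colors after merging the remaining classes appropriately. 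The explicit value of $f(k)$ comes out of the Ramsey estimate and is computable, which is all the theorem asks for.

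\medskip

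The hard part, I expect, will be step three: converting "many distinct neighborhoods" into "a complete coloring on $k$ colors." Distinct neighborhoods guarantee that the classes are pairwise separated by \emph{some} vertex, but a complete coloring needs each pair of color classes to be joined by an \emph{edge}, which is a stronger adjacency condition. Bridging this gap is where the combinatorial core lies: one must argue that once the number of classes exceeds the Ramsey threshold $f(k)$, a monochromatic/homogeneous substructure appears that can be folded into the required $\binom{k}{2}$ conflicts. I would therefore expect the authors' proof to cite a specific counting or Ramsey lemma (attributed to Hoffman and to M\'at\'e in the theorem's header) rather than to construct the coloring by hand, and I would structure my own argument to isolate this extraction as a self-contained lemma so that the computability of $f$ is transparent.
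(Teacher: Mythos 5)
The paper does not prove Theorem~\ref{th:Hell-Miller} at all: it is quoted verbatim from the cited works of Hell and Miller, Hoffman, and M\'at\'e, so there is no internal proof to compare against, and your text must stand on its own. It does not, and you essentially say so yourself: your ``step three'' is deferred rather than carried out, and that step is the entire content of the theorem. Concretely, the gap is this. Having more than $f(k)$ \rc classes means the quotient graph $G^\ast$ (one vertex per class) is \emph{point-determining}: any two of its vertices are separated by some vertex adjacent to exactly one of them. That yields, for each pair of classes, an edge incident to \emph{one} of the two --- it does not yield an edge \emph{between} them. A complete $k$-coloring needs, for every pair of colors $i,j$, an edge with one endpoint colored $i$ and the other colored $j$, while the coloring stays proper. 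Your phrase ``$k$ classes pairwise distinguishable by an edge \dots which yields a complete coloring after merging the remaining classes appropriately'' conflates distinguishability with adjacency, and the merging clause hides a second unproved step: two distinct \rc classes can be completely joined to each other, so the leftover $N-k$ classes cannot in general be folded into $k$ independent color classes; properness constrains the merging and no argument is given that a valid assignment exists.

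The known route (the one behind the three citations) makes both missing pieces explicit, and your instinct to isolate a self-contained Ramsey lemma is the right one --- but the lemma must be stated with its target structures. One shows that every point-determining graph on more than $f(k)$ vertices contains, as an induced subgraph, one of a short list of canonical configurations of unbounded size --- typically a large induced matching, the complement of a large matching, or a half-graph/chain --- and then verifies directly that each such configuration has achromatic number at least $k$ once large enough: for instance, a matching with at least $\binom{k}{2}$ edges admits a complete $k$-coloring by assigning to its edges the $\binom{k}{2}$ distinct pairs of colors, which manufactures exactly the ``edge for every pair of colors'' that distinct neighborhoods alone never provide; the remaining vertices are absorbed into existing color classes, which is possible precisely because these canonical structures are sparse or co-sparse enough to keep the coloring proper. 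The computability of $f$ then falls out of the explicit Ramsey-type bound in the extraction lemma. Without naming these structures and proving they support complete colorings, your proposal is a plausible plan but not a proof, and its central step is the one that would fail as currently phrased.
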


\begin{thm}[Farber, Hahn, Hell and Miller~\cite{Farber1986}]
For a fixed integer $k$, there is an algorithm that, given a graph $G$, determines whether $\psi(G) \geq k$ or not
in time $O(|E(G)|)$.
\label{th:Farber-Hahn-Hell-Miller}
\end{thm}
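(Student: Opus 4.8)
The plan is to combine Theorem~\ref{th:Hell-Miller} with a structural analysis of complete colourings performed on the quotient of $G$ by the twin relation. First I would, in a preprocessing step, compute the \rc classes of $G$ together with their sizes: since two vertices lie in the same class exactly when $N_G(u)=N_G(v)$, the classes and the sizes $\abs{T_i}$ can be obtained by partition refinement in time $O(\abs{V(G)}+\abs{E(G)})$, which is $O(\abs{E(G)})$ for a connected graph. Two distinct classes $T_i,T_j$ are either completely adjacent or completely non-adjacent (whether a cross pair is an edge is determined by the common neighbourhoods), so the classes form the vertex set of a simple \emph{quotient graph} $Q$ on $t$ vertices, where $t$ is the number of \rc classes; each class is an independent set, so $Q$ has no loops. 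If $t>f(k)$, then Theorem~\ref{th:Hell-Miller} immediately yields $\psi(G)\ge k$ and the algorithm answers yes.

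The whole difficulty therefore lies in the case $t\le f(k)$, where $t$ is bounded by a constant depending only on $k$; here I would reduce the question to a constant-size combinatorial check on $Q$ and the numbers $\abs{T_i}$. The key observation is that in any complete colouring each colour class is an independent set of $G$, so the set of \rc classes it meets, its \emph{signature}, is an independent set of $Q$, and two colour classes conflict only if there is a $Q$-edge between their signatures. Since no edge lies inside a single independent set of $Q$, two colour classes with the same signature can never conflict, so in a complete colouring all signatures are pairwise distinct. This yields the a priori bound $\psi(G)\le 2^{t}$, so that for $t\le f(k)$ the number of colours used is bounded by a constant.

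The second step is to turn this into an exact characterisation. I would prove that $\psi(G)\ge k$ holds if and only if there exist $\ell\ge k$ pairwise distinct independent sets $\sigma_1,\dots,\sigma_\ell$ of $Q$ such that for every pair $a\ne b$ there is a $Q$-edge between $\sigma_a$ and $\sigma_b$, and, writing $D_i=\set{c\mid T_i\in\sigma_c}$, one has $1\le\abs{D_i}\le\abs{T_i}$ for every class $T_i$. The forward direction is the signature analysis above. For the converse, given such signatures one colours each $T_i$ by assigning the $\abs{D_i}$ colours of $D_i$ to distinct vertices and the remaining vertices arbitrarily within $D_i$; this is proper because every colour class is supported on an independent set of $Q$, and complete because the inter-signature $Q$-edges produce the required conflicting edges. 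As $t\le f(k)$, the number of families $(\sigma_1,\dots,\sigma_\ell)$ to inspect is bounded by a function of $k$ alone, and each feasibility test $\abs{D_i}\le\abs{T_i}$ uses the precomputed sizes in constant time, so this case is decided in $O(1)$ time for fixed $k$.

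Putting the pieces together, the running time is dominated by the computation of the \rc classes, giving the claimed $O(\abs{E(G)})$ bound for every fixed $k$. The main obstacle, and the genuinely non-trivial ingredient, is Theorem~\ref{th:Hell-Miller}: it is what guarantees that having many \rc classes already forces $\psi(G)\ge k$, and without it the bounded-class reduction alone would not control graphs with many distinct neighbourhoods. A secondary subtlety I would be careful about is that complete colourings need not exist for every number of colours between $\chi(G)$ and $\psi(G)$, so the characterisation must test directly the existence of a complete colouring with \emph{at least} $k$ colours, rather than rely on any interpolation between intermediate values.
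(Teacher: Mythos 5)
Your argument is correct, but note that the paper itself contains no proof of this statement: it is quoted from Farber, Hahn, Hell and Miller~\cite{Farber1986}, so the only meaningful comparison is with their original argument. Your two-phase skeleton --- compute the \rc classes by partition refinement in linear time, answer yes via Theorem~\ref{th:Hell-Miller} when the number $t$ of classes exceeds $f(k)$, and otherwise decide the question by a check whose cost depends on $k$ alone --- is the same skeleton as theirs. Where you genuinely differ is in the bounded case: the original route reduces to a bounded-size instance by capping the sizes of the \rc classes at a function of $k$ (a kernel-type truncation, followed by exhaustive search on the truncated graph), whereas you give a direct combinatorial characterisation on the quotient graph $Q$: families of distinct independent sets (``signatures'') pairwise joined by $Q$-edges, with the counting constraints $1\le\abs{D_i}\le\abs{T_i}$. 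I checked your characterisation in both directions and it is sound: distinct classes are completely adjacent or completely non-adjacent, so the signature of a colour class is independent in $Q$; two colours conflict exactly when a $Q$-edge joins their signatures, which forces distinct signatures and yields $\psi(G)\le 2^t$; and in the converse construction, assigning the colours of $D_i$ injectively within $T_i$ makes each $\sigma_c$ exactly the signature of colour $c$, so properness and completeness both hold. Your constraint $\abs{D_i}\le\abs{T_i}$ together with $\abs{D_i}\le 2^t$ is in effect the same insight that justifies the truncation in the original proof, so the two approaches buy the same thing; yours has the advantage of an explicit yes/no certificate on $Q$, theirs of a reusable kernelisation statement.

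Two peripheral corrections that do not affect validity. First, your closing caveat is backwards for ordinary graphs: by the interpolation theorem of Harary, Hedetniemi and Prins, complete $n$-colourings \emph{do} exist for every $n$ with $\chi(G)\le n\le\psi(G)$; your proof is unaffected since you test directly for a complete colouring with at least $k$ colours, but the remark as stated is false in this setting (it is in the \tec{} and signed settings that such monotonicity phenomena break down, as the paper's Figure~\ref{ct-ex-indu} illustrates). Second, partition refinement runs in $O(\abs{V(G)}+\abs{E(G)})$, so the stated $O(\abs{E(G)})$ bound needs the harmless convention you mention (connectivity, or preprocessing isolated vertices, which all lie in a single \rc class).
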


The following question is thus natural when considering these two results.

\begin{ques}
 Is it possible to determine if one of our parameters is greater than some integer $k$ in FPT time where $k$ is the parameter?
\end{ques}

Theorem~\ref{th:Hell-Miller} can be generalized to \tec{} graphs and to signed graphs but we were not able to generalize Theorem~\ref{th:Farber-Hahn-Hell-Miller} using the same techniques as in~\cite{Farber1986}.

\bigskip

\noindent
{\bf Acknowledgment.}
I would like to thank Herv\'e Hocquard and \'Eric Sopena for their advice and for their help in the writing of this article. Also thanks to Pascal Ochem for helpful discussions.

\bibliographystyle{plain}
\bibliography{biblio}

\end{document}